\newtheorem{theorem}{Theorem}[section]
\newtheorem{lemma}{Lemma}[section]
\newtheorem{corollary}{Corollary}[section]
\newtheorem{proposition}{Proposition}[section]
\newtheorem*{theorem*}{Theorem}
\theoremstyle{definition}
\numberwithin{equation}{subsection}
\newcommand{\ignore}[1]{}
\newcommand{\mynote}[1]{}
\begin{document}
\setcounter{section}{0}
\title{\bf On $R$-triviality of $F_4$-II}
\author{Maneesh Thakur }
\date{}
\maketitle
\begin{abstract}
\noindent
\it{Simple algebraic groups of type $F_4$ defined over a field $k$ are the full automorphism groups of Albert algebras over $k$. Let $A$ be an Albert algebra over a field $k$ of arbitrary characteristic whose all isotopes are isomorphic. We prove that $\text{\bf Aut}(A)$ is $R$-trivial, in the sense of Manin. If $k$ contains cube roots of unity and $A$ is any Albert algebra over $k$, we prove that there is an isotope $A^{(v)}$ of $A$ such that $\text{\bf Aut}(A^{(v)})$ is $R$-trivial.} 
\end{abstract} 
\noindent
\small{{\it Keywords: Exceptional groups, Algeraic groups, Albert algebras, Structure group, Kneser-Tits conjecture, $R$-triviality.}}  

\section{\bf Introduction}
This work builds on the results from (\cite{Th-5}). We recall at this stage that simple algebraic groups of type $F_4$ defined over a field $k$ are precisely the full groups of automorphisms of Albert algebras defined over $k$. In papers (\cite{Th-5}, \cite{A-C-P-1}), the authors proved (independently) that the automorphism group of an Albert algebra arising from the first Tits construction is $R$-trivial, the proof in (\cite{Th-5}) being characteristic free.

In this paper, we prove that for an Albert algebra $A$ over a field $k$ of arbitrary characteristic whose all isotopes are isomorphic, the algebraic group $\text{\bf Aut}(A)$ is $R$-trivial. Since for first Tits construction Albert algebras all isotopes are isomorphic (see \cite{PR2}), results in this paper improve the result proved in (\cite{A-C-P-1}, \cite{Th-5}).

We remark here that the class of Albert algebras whose all isotopes are isomorphic strictly contains the class of Albert algebras that are first Tits constructions (see \cite{PR2}). The case of reduced Albert algebras which have no nonzero nilpotents and Albert division algebras in general is work in progress.

The knowledge that a group is $R$-trivial helps in studying its rationality properties. Recently, in the papers (\cite{Th-4}, \cite{A-C-P-2}), $R$-triviality of the structure group of Albert division algebras was proved, thereby proving the longstanding Tits-Weiss conjecture for Albert division algebras as well as the Kneser-Tits conjecture for groups of type $E^{78}_{8,2}$ and $E^{78}_{7,1}$. These groups have the structure group of an Albert division algebra as their anisotropic kernel. In (\cite{Th-1}) we had proved $R$-triviality for $\text{\bf Aut}(A)$ for pure first Tits construction Albert algebras, while in (\cite{Th-2}), $R$-triviality of the structure group was proved to settle the Tits-Weiss conjecture for first Tits construction Albert division algebras, as well as reduced Albert algebras. In the paper (\cite{P-T-W}), a proof of Kneser-Tits conjecture for groups of type $E^{66}_{8,2}$ is given by proving $R$-triviality of such groups. The fundamental result of P. Gille (\cite{G}) connects the group of $R$-equivalent points of an isotropic, simple, simply connected algebraic group with its Whitehead group, thereby reducing the Kneser-Tits problem to proving $R$-triviality of the group or otherwise.

In this paper, we prove that, for an Albert algebra $A$ over a field $k$ of arbitrary characteristic whose all isotopes are isomorphic, the group $\text{\bf Aut}(A)$ is $R$-trivial. We get the $R$-triviality of the group of automorphisms of first Tits construction Albert algebras as a consequence. If $k$ contains cube roots of unity and $A$ is any Albert algebra over $k$, we prove that there exists $v\in A$ such that $\text{\bf Aut}(A^{(v)})$ is $R$-trivial.

\section{\bf Preliminaries} We refer the reader to (\cite{P2}), (\cite{SV}) and (\cite{KMRT}) for basic material on Albert algebras. In this section, we quickly introduce some notions related to Albert algebras that are indispensable. All base fields will be 
assumed to be infinite of arbitrary characteristic unless specified otherwise. \\
\noindent
For what follows, we refer to (\cite{PR7} or \cite{PR5}). Let $J$ be a finite dimensional vector space over a field $k$. A \emph{cubic norm structure} on $J$ is a triple $(N,\#, c)$, $N:J\rightarrow k$ is a cubic form called the \emph{norm}, $\#:J\rightarrow J$ is a quadratic map called the \emph{adjoint} and $c\in J$ is a base point, called the \emph{identity element} of the norm structure. There is an associated trace bilinear form $T:J\times J\rightarrow k$ and these satisfy some identities (see \cite{PR7} or \cite{PR5}).
Let $x\in J$. Define  
$$U_x(y):=T(x,y)x-x^{\#}\times y,~y\in J,$$
where $a\times b:=(a+b)^{\#}-a^{\#}-b^{\#},a,b\in J$. 
Then with $1_J:=c$ and the endomorphisms $U_x$ as $U$-operators, $J$ is a unital \emph{quadratic Jordan algebra} (see \cite{McK}), denoted by $J(N,c)$. An element $x\in J$ is defined to be \emph{invertible} if $N(x)\neq 0$ and $x^{-1}:=N(x)^{-1}x^{\#}$. The structure $J(N,c)$ is a \emph{division algebra} if $U_x$ is surjective for all $x\neq 0$, or equivalently, $N(x)\neq 0$ for all $x\neq 0$. Special Jordan algebras of degree $3$ provide imporant class of examples, we list them below for our purpose: \\
\vskip1mm
\noindent
{\bf Example.} Let $D$  be a separable associative algebra over $k$ of degree $3$. Let $N_D$ denote its norm and $T_D$ the trace. Let $\#:D\rightarrow D$ be the adjoint map. Then $(N_D, \#, 1_D)$ is a cubic norm structure, where $1_D$ is the unit element of $D$. This yields a quadratic Jordan algebra structure on $D$, which we will denote by $D_+$. \\
\noindent
 Let $(B,\sigma)$ be a separable associative algebra over $k$ with an involution $\sigma$ of the second kind (over its center). With the unit element $1$ of $B$ and the restriction of the norm $N_B$ of $B$ to $(B,\sigma)_+:=\{b\in B|\sigma(b)=b\}$, we obtain a cubic norm structure and hence a Jordan algebra structure on $(B,\sigma)_+$ which is a substructure of $B_+$. \\
\noindent
\vskip1mm
\noindent
    {\bf Tits processes and Tits constructions :} Let $D$ be a finite dimensional associative $k$-algebra of degree $3$ with norm $N_D$ and trace $T_D$. Let $\lambda\in k^{\times}$. On the $k$-vector space
    $D\oplus D\oplus D$, we define a cubic norm structure as below.  
$$1:=(1,0,0),~N((x,y,z)):=N_D(x)+\lambda N_D(y)+\lambda^{-1}N_D(z)-T_D(xyz),$$
$$(x,y,z)^{\#}:=(x^{\#}-yz,\lambda^{-1}z^{\#}-xy,\lambda y^{\#}-zx).$$  
    The Jordan algebra associated to this norm structure is denoted by $J(D,\lambda)$. The algebra $D_+$ is a subalgebra of $J(D,\lambda)$, corresponding to the first summand.
    The algebra $J(D,\lambda)$ is a division algebra if and only if $\lambda\notin N_D(D)$ (see \cite{PR7}, 5.2). This construction is called the \emph{first Tits process} arising from the parameters $D$ and $\lambda$. 

    Let $K$ be a quadratic \'{e}tale extension of $k$ and $B$ a separable associative algebra of degree $3$ over $K$ with an involution $\sigma$ over $K/k$ . Let $x\mapsto \overline{x}$ denote
    the nontrivial $k$-automorphism of $K$. For an \emph{admissible pair} $(u,\mu)$, i.e., $u\in (B,\sigma)_+$ such that $N_B(u)=\mu\overline{\mu}$ for some $\mu\in K^{\times}$, define a cubic norm structure on the $k$-vector space $(B,\sigma)_+\oplus B$ as follows:
$$N((b,x)):=N_B(b)+T_K(\mu N_B(x))-T_B(bxu\sigma(x)),$$
$$(b,x)^{\#}:=(b^{\#}-xu\sigma(x), \overline{\mu}\sigma(x)^{\#}u^{-1}-bx),~1:=(1_B,0).$$ 
The Jordan algebra obtained from this cubic norm structure is denoted by $J(B,\sigma,u,\mu)$. Note that $(B,\sigma)_+$ is a subalgebra of $J(B,\sigma, u,\mu)$ through the first summand. Then $J(B,\sigma,u,\mu)$ is a division algebra if and only if $\mu$ is not a norm from $B$ (see \cite{PR7}, 5.2). This construction is called the \emph{second Tits process} arising from the parameters $(B,\sigma),u$ and $\mu$.
\vskip1mm
\noindent
The Tits process starting with a central simple algebra $D$ and $\lambda\in k^{\times}$ yields the \emph{first Tits construction} Albert algebra $A=J(D,\lambda)$ over $k$. 
Similarly, in the Tits process if we start with a central simple algebra $(B,\sigma)$ with center a quadratic \'{e}tale algebra $K$ over $k$ and an involution $\sigma$ of the second kind, $u,\mu$ as described above, we get the \emph{second Tits construction} Albert algebra $A=J(B,\sigma,u,\mu)$ over $k$. 
\noindent
One knows that all Albert algebras can be obtained via Tits constructions. 

An Albert algebra is a division algebra if and only if its (cubic) norm $N$ is anisotropic over $k$ (see \cite{KMRT}, \S 39).
If $A=J(B,\sigma,u,\mu)$ as above, then $A\otimes_kK\cong J(B,\mu)$ as $K$-algebras, where $K$ is the center of $B$ (see \cite{KMRT}, 39.4).
\vskip1mm
\noindent 
Let $A$ be an Albert algebra over $k$. If $A$ arises from  the first construction, but does not arise from the second construction then we call $A$ a
    \emph{pure first construction} Albert algebra. Similarly, \emph{pure second construction} Albert algebras are defined as those which do not arise from the first Tits construction. 
\vskip1mm
\noindent
For an Albert division algebra $A$, any subalgebra is either $k$ or a cubic subfield of $A$ or of the form $(B,\sigma)_+$ for a degree $3$ central simple algebra $B$ with an involution $\sigma$ of the second kind over its center $K$, a quadratic \'{e}tale extension of $k$ (see \cite{J1}, Chap. IX, \S 12, Lemma 2,  \cite{PR5}). 
\vskip1mm
\noindent
    {\bf Norm similarities, isotopes :} Let $J$ be a quadratic Jordan algebra of degree $3$ over $k$ and $N$ its norm map. By a \emph{norm similarity} of $J$ we mean a bijective $k$-linear map $f:J\rightarrow J$ such that $N(f(x))=\nu(f)N(x)$ for all $x\in J$ and some $\nu(f)\in k^{\times}$. When $k$ is infinite, the notions of norm similarity and isotopy for degree $3$ Jordan algebras coincide (see \cite{J1}, Chap. VI, Thm. 6, Thm. 7). Let $v\in J$ be invertible. The $k$-vector space $J$ with the new identity element $1^{(v)}:=v^{-1}$ and the $U$-operator defined by $U_x^{(v)}:= U_xU_p$, is called the $v$-isotope of $J$ and is denoted by $J^{(v)}$.  For a second Tits process $J(B,\sigma,u,\mu)$ as above and $v\in (B,\sigma)_+$ invertible, we have $J(B,\sigma, u, \mu)^{(v)}\cong J(B,\sigma_v, uv^{\#}, N(v)\mu)$ (see \cite{P2}, 3.11).
    
\noindent
Let $J$ be a degree $3$ Jordan algebra over $k$ with norm map $N$. For $a\in J$, the $U$-operator $U_a$ is given by $U_a(y):=T(a,y)a-a^{\#}\times y,~y\in J$. When $a\in J$ is invertible, one knows that $U_a$ is a norm similarity of $J$, in fact, for any $x\in A,~N(U_a(x))=N(a)^2N(x)$. It also follows that $N(x^{\#})=N(x)^2$. 
\vskip1mm
\noindent
{\bf Albert algebras and algebraic groups :} For a $k$-algebra $X$ and a field extension $L$ of $k$, $X_L$ will denote the $L$-algebra $X\otimes_kL$. Let $A$ be an Albert algebra over $k$ with norm $N$ and $\overline{k}$ be an algebraic closure of $k$. It is well known that the full group of automorphisms 
$\text{\bf Aut}(A):=\text{Aut}(A_{\overline{k}})$ is a simple algebraic group of type $F_4$ defined over $k$ and all simple groups of type $F_4$ defined over $k$ arise this way .
We will denote the group of $k$-rational points of $\text{\bf Aut}(A)$ by $\text{Aut}(A)$, i.e., $\text{Aut}(A)=\text{\bf Aut}(A)(k)$. It is known that $A$ is a division algebra if and only if the norm form $N$ of $A$ is anisotropic (see \cite{Spr}, Thm. 17.6.5). Albert algebras whose norm form is isotropic over $k$, i.e. has a nontrivial zero over $k$, are called \emph{reduced}. 

The \emph{structure group} of $A$ is the full group $\text{\bf Str}(A)$ of norm similarities of $N$, is a connected reductive group over $k$, of type $E_6$. We denote by $\text{Str}(A)$ the group of $k$-rational points $\text{\bf Str}(A)(k)$.


The automorphism group $\text{\bf Aut}(A)$ is the stabilizer of $1\in A$ in $\text{\bf Str}(A)$.
 \vskip1mm
\noindent

\vskip1mm
\noindent
{\bf $R$-triviality of algebraic groups:}
Let $X$ be an irreducible variety over a field $k$ with the set of $k$-rational points $X(k)\neq \emptyset$. We call $x,y\in X(k)$ as $R$-equivalent 
if there exists a sequence $x_0=x, x_1,\cdots, x_n=y$ of points in $X(k)$ and rational maps $f_i:\mathbb{A}_k^1\rightarrow X,~1\leq i\leq n$, defined over $k$ and regular at $0$ and $1$, such that $f_i(0)=x_{i-1},~f_i(1)=x_i$ (see \cite{M}). 

Let $G$ be a connected algebraic group defined over $k$. The set of points in $G(k)$ that are $R$-equivalent to $1\in G(k)$ is a normal subgroup of $G(k)$, denoted by $RG(k)$. The set $G(k)/R$ of $R$-equivalence classes in $G(k)$ is in a canonical bijection with $G(k)/RG(k)$ and thus has a natural group structure. We identify $G(k)/R$ with the group $G(k)/RG(k)$. This group is useful in studying rationality properties of $G$. 

Call $G$ \emph{$R$-trivial} if $G(L)/R=\{1\}$ for all field extensions $L$ of $k$. A variety $X$ defined over $k$ is defined to be $k$-\emph{rational} if $X$ is birationally isomorphic over $k$ to an affine space. One knows that $G$ is $k$-rational then $G$ is $R$-trivial (see \cite{Vos}, Chap. 6, Prop. 2).  Also, owing to the fact that tori of rank at most $2$ are rational, one knows that algebraic groups of rank at most $2$ are rational. 

\section{\bf $R$-triviality results} In this section, we prove results on $R$-triviality of various groups and also prove our main theorems. We need a few results from (\cite{Th-2}) which we recall below. We remark here that if $A$ is a \emph{reduced} Albert algebra, then there is an isotope $A^{(v)}$ of $A$ such that $A^{(v)}$ contains nonzero nilpotents. Hence $\text{Aut}(A^{(v)})$ is a simple abstract group (Thm. 14, \cite{J-iv}). Since $R\text{\bf Aut}(A^{(v)})(k)$ is a normal subgroup of $\text{\bf Aut}(A^{(v)})(k)$ and $\text{\bf Aut}(A^{(v)})$ contains $k$-rational subgroups, it follows that $\text{\bf Aut}(A^{(v)})$ is $R$-trivial. This therefore proves that for any reduced Albert algebra $A$, whose all isotopes are isomorphic, the group $\text{\bf Aut}(A)$ is $R$-trivial. We record this as
\begin{proposition}\label{reduced-R-triv} Let $A$ be a reduced Albert algebra over a field $k$. Then there exists an invertible element $v\in A$ such that the algebraic group $\text{\bf Aut}(A^{(v)})$ is $R$-trivial.
\end{proposition}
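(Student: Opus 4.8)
The plan is to follow the reasoning sketched just before the statement of the proposition and make it precise. The starting point is the classical fact that a reduced Albert algebra $A$ over $k$ has isotopes containing nonzero nilpotent elements: indeed, $A$ is reduced means its norm form is isotropic, so $A$ contains a nonzero element $w$ with $N(w)=0$. One then wants to pass to an isotope $A^{(v)}$ in which the identity element is "degenerate enough" that the algebra has nontrivial nilpotents; concretely I would choose $v$ invertible so that $1^{(v)} = v^{-1}$ together with the zero-norm structure forces the presence of an idempotent decomposition with a nilpotent piece, using the Peirce decomposition of a reduced cubic Jordan algebra. The key external input here is Thm. 14 of \cite{J-iv}, which says that the automorphism group of such a (reduced, non-split-enough) Albert algebra $A^{(v)}$ is a \emph{simple} abstract group, i.e. $\text{Aut}(A^{(v)}) = \text{\bf Aut}(A^{(v)})(k)$ has no proper nontrivial normal subgroups.

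Granting that, the argument is short. First I would recall that $R\text{\bf Aut}(A^{(v)})(k)$ is, by the general theory recalled in the Preliminaries (following \cite{M}), a normal subgroup of $\text{\bf Aut}(A^{(v)})(k)$. Second, I would observe that $\text{\bf Aut}(A^{(v)})$ contains nontrivial $k$-rational $R$-trivial subgroups — for instance, rank-$\le 2$ subgroups such as the $\text{\bf SL}_1$- or $\text{\bf SU}$-type subgroups attached to a $9$-dimensional subalgebra, or simply any split torus coming from the isotropic norm form — so $R\text{\bf Aut}(A^{(v)})(k) \neq \{1\}$. By simplicity of the abstract group $\text{\bf Aut}(A^{(v)})(k)$, a normal subgroup that is not trivial must be everything, hence $R\text{\bf Aut}(A^{(v)})(k) = \text{\bf Aut}(A^{(v)})(k)$, i.e. $\text{\bf Aut}(A^{(v)})(k)/R = \{1\}$. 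Finally, to get $R$-triviality (not just triviality of $R$-equivalence over $k$), I would run the same argument over every field extension $L/k$: since $A^{(v)}\otimes_k L$ is again a reduced Albert algebra whose norm stays isotropic, the same two facts (simplicity of the abstract automorphism group, existence of a nontrivial $R$-trivial $L$-rational subgroup) apply verbatim, giving $\text{\bf Aut}(A^{(v)})(L)/R = \{1\}$ for all $L$.

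I expect the only real subtlety — the main obstacle — to be the choice of $v$ and the verification that $A^{(v)}$ genuinely falls under the hypotheses of Thm. 14 of \cite{J-iv}, i.e. that it has nonzero nilpotents so that its automorphism group is abstractly simple; one must be careful about the split Albert algebra (the reduced Albert algebra with split octonion coordinate algebra) and about small or peculiar base fields, and in characteristic $2$ or $3$ the Peirce/idempotent bookkeeping needs care. Everything after that — the normality of $R\text{\bf Aut}$, the presence of a nontrivial rational subgroup, and the base-change step — is formal and uses only results already quoted in the Preliminaries. If for some exceptional $A$ no isotope acquires nilpotents, one falls back on the fact that rank-$\le 2$ rational subgroups together with a direct $R$-equivalence argument still suffice, but I would expect the nilpotent/simplicity route to cover the reduced case uniformly.
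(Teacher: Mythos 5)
Your proposal follows essentially the same route as the paper: pass to an isotope $A^{(v)}$ containing nonzero nilpotents, invoke Thm.~14 of the cited Jacobson paper for abstract simplicity of $\text{Aut}(A^{(v)})$, and combine normality of $R\text{\bf Aut}(A^{(v)})(k)$ with the existence of nontrivial $k$-rational subgroups, repeating the argument over every extension $L/k$. This matches the paper's (equally terse) justification, so the proposal is correct and not a genuinely different argument.
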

\begin{corollary} Let $A$ be a reduced Albert algebra over a field $k$ whose all isotopes are isomorphic. Then $\text{\bf Aut}(A)$ is $R$-trivial.
\end{corollary}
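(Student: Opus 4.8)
The plan is to combine Proposition~\ref{reduced-R-triv} with a short observation about how $R$-triviality interacts with the hypothesis that all isotopes are isomorphic. First I would invoke Proposition~\ref{reduced-R-triv}: since $A$ is reduced, there exists an invertible $v\in A$ such that $\text{\bf Aut}(A^{(v)})$ is $R$-trivial. The key point is that the isotope $A^{(v)}$ is again a reduced Albert algebra over $k$ (isotopy preserves the isomorphism class of the norm form up to a scalar, hence preserves isotropy of $N$), so it makes sense to talk about its isotopes as well.

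Next I would use the hypothesis. By assumption all isotopes of $A$ are mutually isomorphic; in particular $A^{(v)}\cong A$ as Albert algebras over $k$. An isomorphism of Albert algebras induces an isomorphism of the corresponding automorphism group schemes over $k$, so $\text{\bf Aut}(A)\cong\text{\bf Aut}(A^{(v)})$ as algebraic groups over $k$. Since $R$-triviality is an invariant of the $k$-isomorphism class of an algebraic group (it is phrased entirely in terms of $R$-equivalence classes in $G(L)$ for field extensions $L/k$, which transport along any $k$-isomorphism), and $\text{\bf Aut}(A^{(v)})$ is $R$-trivial by the previous step, we conclude that $\text{\bf Aut}(A)$ is $R$-trivial. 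This completes the argument.

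I do not anticipate a genuine obstacle here: the corollary is essentially the specialization of Proposition~\ref{reduced-R-triv} under the ``all isotopes isomorphic'' hypothesis, and the only things to check are the two routine transfer facts — that $A^{(v)}\cong A$ forces $\text{\bf Aut}(A)\cong\text{\bf Aut}(A^{(v)})$ over $k$, and that $R$-triviality is preserved under such an isomorphism. If one wanted to be careful, the mildly delicate point is merely making sure the isotope of a reduced algebra is still reduced so that Proposition~\ref{reduced-R-triv} applies to $A$ in the first place and the element $v$ it produces is legitimate; but this is immediate from the relation between the norm of $A$ and that of $A^{(v)}$.
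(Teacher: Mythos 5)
Your proposal is correct, and it reaches the conclusion by a cleaner but logically different route than the paper. You treat Proposition~\ref{reduced-R-triv} as a black box: it produces an invertible $v$ with $\text{\bf Aut}(A^{(v)})$ $R$-trivial, the hypothesis gives $A^{(v)}\cong A$, conjugation by that isomorphism identifies $\text{\bf Aut}(A)$ with $\text{\bf Aut}(A^{(v)})$ as algebraic groups over $k$, and $R$-triviality transports along a $k$-isomorphism of groups since $R$-equivalence in $G(L)$ is defined purely in terms of rational maps $\mathbb{A}^1_L\to G_L$. All of these transfer steps are sound (and your aside about $A^{(v)}$ being reduced, while true because the norm of $A^{(v)}$ is $N(v)N$, is not even needed once you know $A^{(v)}\cong A$). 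The paper instead reopens the argument behind the proposition: from the existence of an isotope with a nonzero nilpotent and the hypothesis that all isotopes are isomorphic, it concludes that $A$ \emph{itself} contains nonzero nilpotents, hence so does $A\otimes M$ for every extension $M/k$; then $\text{Aut}(A\otimes M)$ is an abstract simple group (Jacobson), and since $R\text{\bf Aut}(A)(M)$ is a nontrivial normal subgroup (nontrivial because $\text{\bf Aut}(A)$ contains $k$-rational rank-$2$ subgroups), simplicity forces $\text{Aut}(A\otimes M)=R\text{\bf Aut}(A)(M)$. What the paper's version buys is the extra structural fact that $A$ itself has nilpotents and an argument that visibly works uniformly over every $M$; what your version buys is brevity and a complete decoupling of the corollary from the nilpotent/simplicity mechanism, at the cost of leaning entirely on the proposition being stated (as it is) for all field extensions.
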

\begin{proof} As we discussed above, there exists an isotope $A^{(v)}$ of $A$ which contains a nonzero nilpotent. For such an isotope, for any field extension $M$ of $k$ the algebra $A^{(v)}\otimes M=
  (A\otimes M)^{(v\otimes 1)}$ also contains nonzero nilpotents. Since all isotopes of $A$ are isomorphic, it follows that $A$ contains nonzero nilpotents and hence $A\otimes M$ contains nonzero nilpotents. Since the group of automorphisms of a reduced Albert algebra which contain nonzero nilpotents is an abstract simple group, it follows by the above discussion that $\text{Aut}(A\otimes M)=R \text{\bf Aut}(A)(M)$. Hence $\text{\bf Aut}(A)$ is $R$-trivial. 
  \end{proof}
Hence, we may assume for the rest of the paper that $A$ is a division Albert algebra. We need the following result (Prop. 3.1, \cite{A-C-P-2}):
\begin{proposition}\label{aut-S-rtriv} Let $A$ be an Albert division algebra. Let $S\subset A$ be a $9$-dimensional subalgebra. Then, with the notations as above, $\text{\bf Aut}(A,S)$ is rational over $k$, hence is $R$-trivial. 
\end{proposition}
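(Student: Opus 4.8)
The plan is to handle the two possibilities for a $9$-dimensional division subalgebra $S$ separately and, in each, to exhibit $\text{\bf Aut}(A,S)$ as a birational product of rational rank-$2$ groups of type $A_2$; rationality then gives $R$-triviality by (\cite{Vos}, Chap. 6, Prop. 2). As $A$ is a division algebra, $S$ is a $9$-dimensional division subalgebra, so by (\cite{PR6}, \cite{J1}) either $S=D_+$ with $D$ a degree $3$ central division $k$-algebra and $A\cong J(D,\mu)$, or $S=(B,\sigma)_+$ with $B$ a degree $3$ central division algebra over a quadratic field $K/k$ with a unitary involution $\sigma$ and $A\cong J(B,\sigma,u,\mu)$.

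For the first construction, the explicit extension of automorphisms of $D_+$ to $J(D,\mu)$ (\cite{Th-2}, \cite{KMRT}) identifies $\text{\bf Aut}(A,D_+)$ with $H/\mathbb{G}_m$, where $H=\{(g,h)\in\mathbf{GL}_1(D)\times\mathbf{GL}_1(D)\mid N_D(g)=N_D(h)\}$ and $\mathbb{G}_m$ is the diagonally embedded central $k^{\times}$. Here the decisive point is a same-algebra trick: the biregular map $(g,h)\mapsto(g,g^{-1}h)$, landing in the second factor in $\mathbf{SL}_1(D)$ since $N_D(g^{-1}h)=1$, identifies $H$ with $\mathbf{GL}_1(D)\times\mathbf{SL}_1(D)$ as $k$-varieties, and the central $\mathbb{G}_m$ then acts only on the first factor. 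Passing to the quotient yields $\text{\bf Aut}(A,D_+)\cong\mathbf{PGL}_1(D)\times\mathbf{SL}_1(D)$ as varieties, a product of two rank-$2$ groups of type $A_2$, hence rational. This argument is characteristic-free.

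For the second construction I would use the description (\cite{Th-2}, \cite{KMRT}) of the automorphisms of $A=J(B,\sigma,u,\mu)$ stabilising $S$, which realises $\text{\bf Aut}(A,S)$ as $[\mathbf{U}(B,\sigma)\times\mathbf{U}(B,\sigma_u)]^{\det}/K^{(1)}$, with $\sigma_u=\text{Int}(u)\circ\sigma$ and $K^{(1)}$ the norm-$1$ torus of $K/k$ embedded diagonally in the centres. Restriction to $S$ gives a surjection $\pi\colon\text{\bf Aut}(A,S)\to\mathbf{PGU}(B,\sigma)=\text{\bf Aut}(S)$ with kernel $\text{\bf Aut}(A/S)\cong\mathbf{SU}(B,\sigma_u)$, both base and kernel being rank-$2$ type $A_2$, hence rational. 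It therefore suffices to produce a rational section $s$ of $\pi$, for then $(n,\bar p)\mapsto n\cdot s(\bar p)$ is a birational equivalence $\text{\bf Aut}(A,S)\sim\mathbf{SU}(B,\sigma_u)\times\mathbf{PGU}(B,\sigma)$ and the group is rational. To construct $s$ I would first lift $\bar p\in\mathbf{PGU}(B,\sigma)$ to a similitude $g\in\mathbf{GU}(B,\sigma)$—possible rationally because $\mathbf{GU}(B,\sigma)\to\mathbf{PGU}(B,\sigma)$ has the quasi-trivial kernel $R_{K/k}\mathbb{G}_m$ (Hilbert 90)—and then, using the extension formula of (\cite{Th-2}), attach the auxiliary unitary parameter $q\in\mathbf{U}(B,\sigma_u)$ of prescribed reduced norm $\overline{N_B(g)}\,N_B(g)^{-1}\in K^{(1)}$.

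The hard part is precisely this last choice: it requires a rational section of the reduced norm $N_B\colon\mathbf{U}(B,\sigma_u)\to K^{(1)}$, equivalently a rational splitting of $1\to\mathbf{SU}(B,\sigma_u)\to\mathbf{U}(B,\sigma_u)\to K^{(1)}\to1$. The central scalars $K^{(1)}\subset\mathbf{U}(B,\sigma_u)$ realise only cubes under $N_B$, so one must use the noncommutative structure. My approach would be to restrict $N_B$ to a maximal $k$-torus $T=R^1_{E/\ell}\mathbb{G}_m$ attached to a $\sigma_u$-stable maximal subfield $E=K\ell$ of $B$, with $\ell$ cubic over $k$: such $T$ is rational, and over $\overline{k}$ the restricted map becomes the product surjection $(\mathbb{G}_m)^3\to\mathbb{G}_m$, which I would aim to split and descend to $k$. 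The two genuine difficulties I anticipate are that $\ell$ is an indecomposable cubic field (as $B$ is division), so there is no coordinate factor yielding an obvious section, and that in characteristic $2$ the Cayley parametrisation of $\mathbf{U}(B,\sigma_u)$ by skew-hermitian elements is unavailable; hence the splitting must be produced characteristic-freely, presumably through a norm-principle argument for the unitary norm. This step is the technical heart of the proposition.
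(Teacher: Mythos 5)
First, a remark on what you are being compared against: the paper does not prove this proposition at all. It is imported verbatim from (Prop.\ 3.1, \cite{A-C-P-2}) and used as a black box, so your attempt has to be judged against what a complete proof requires rather than against an argument in the text.

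Your first case, $S=D_+\subset A=J(D,\mu)$, is correct and complete: the subgroup of the stabilizer restricting to inner automorphisms of $D_+$ (this is the identity component; geometrically there is also a transpose-type component, which has no $k$-points when $D$ is division, so the distinction is harmless for the way the proposition is used) is exactly $H/\mathbb{G}_m$ with $H=\{(g,h)\mid N_D(g)=N_D(h)\}$, and your shear $(g,h)\mapsto (g,g^{-1}h)$ identifies it, as a $k$-variety, with $\mathbf{PGL}_1(D)\times\mathbf{SL}_1(D)$; rationality follows, characteristic-free.

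The gap is in the second case, and it is worse than you say: the subgoal you call the ``technical heart'' is not merely hard, it is false in exactly the situation the proposition concerns. Write $A=J(B,\sigma,u,\mu)$ with $S=(B,\sigma)_+$, $K=Z(B)$, and write $K^{(1)}$ for the norm-one torus of $K/k$. Since $A$ is a division algebra, $\mu\notin N_B(B^{\times})$ (see \cite{PR7}, 5.2, as quoted in the paper), while admissibility gives $N_B(u)=\mu\bar{\mu}$. Suppose $N_B\colon\mathbf{U}(B,\sigma_u)\to K^{(1)}$ had a rational section defined over $k$, say on a dense open $V\subset K^{(1)}$. Then the image $I:=N_B\bigl(U(B,\sigma_u)(k)\bigr)$ is a subgroup of $K^{(1)}(k)$ containing $V(k)$. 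The torus $K^{(1)}$ is one-dimensional, hence a rational $k$-variety with Zariski-dense $k$-points ($k$ is infinite). So for $\lambda_0:=\mu\bar{\mu}^{-1}\in K^{(1)}(k)$ the dense open $\lambda_0V^{-1}\cap V$ contains a $k$-point $w$, and $\lambda_0=(\lambda_0w^{-1})\,w\in V(k)\cdot V(k)\subset I$. Writing $\lambda_0=N_B(q)$ with $q\in U(B,\sigma_u)(k)\subset B^{\times}$, we get $\mu^2=\lambda_0\,(\mu\bar{\mu})=N_B(q)N_B(u)=N_B(qu)\in N_B(B^{\times})$; since also $\mu^3=N_B(\mu 1_B)\in N_B(B^{\times})$, we conclude $\mu=\mu^3(\mu^2)^{-1}\in N_B(B^{\times})$, contradicting that $A$ is division. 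So no rational splitting of $1\to\mathbf{SU}(B,\sigma_u)\to\mathbf{U}(B,\sigma_u)\to K^{(1)}\to 1$ exists, and your torus/norm-principle plan cannot be carried out; this also explains the difficulties you ran into.

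What saves your overall strategy is that your reduction overshoots: you never need to hit arbitrary elements of $K^{(1)}$, only the values $\bar{\nu}^{-1}\nu$ with $\nu=N_B(g)$, $g\in\mathbf{GU}(B,\sigma)$, i.e.\ you need a rational trivialization of the $\mathbf{SU}(B,\sigma_u)$-torsor pulled back along $g\mapsto\bar{\nu}^{-1}\nu$ over the similitude group --- and these values lie in the reduced-norm coset, which avoids $\lambda_0$. This weaker lifting comes for free from the extension theorem rather than from any norm splitting: $\mathbf{PGU}(B,\sigma)$ has rank $2$, hence is a rational $k$-variety (as the paper recalls), so $F:=k(\mathbf{PGU}(B,\sigma))$ is purely transcendental over $k$ and $A\otimes_kF$ is still a division algebra; applying the extension theorem for automorphisms of $9$-dimensional subalgebras (\cite{P-S-T1}, Thm.\ 3.1; \cite{P}, Thm.\ 5.2) over $F$ to the generic automorphism of $S\otimes_kF$ produces an element of $\text{\bf Aut}(A,S)(F)$ lying over the generic point of $\mathbf{PGU}(B,\sigma)$, which is precisely a rational section of your $\pi$. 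With that substitution, your birational decomposition $\text{\bf Aut}(A,S)\sim\mathbf{SU}(B,\sigma_u)\times\mathbf{PGU}(B,\sigma)$ and the rationality conclusion go through. (Alternatively, if one only wants the statement the paper actually uses --- that $k$-points of $\text{\bf Aut}(A,S)$ are $R$-equivalent to $1$ --- one can bypass sections entirely by deforming the similitude $g$ to $1$ inside $\mathbf{GU}(B,\sigma)$, which is stably rational and hence $R$-trivial by a theorem of Merkurjev.)
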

We need (Lemma 6.1, \cite{Th-4}), we include it here for convenience. 
\begin{lemma}\label{ext} Let $L/k$ be a cyclic cubic extension and $K/k$ a quadratic \'{e}tale extension and $\nu\in K^{\times}$ be an element with $N_K(\nu)=1$ and let $*:LK\rightarrow LK$ be the nontrivial $L$-automorphism of $LK$. Let $J$ denote the Tits process $J=J(LK,*, 1, \nu)$. Let $\rho$ be a generator of $Gal(L/k)$. Then $\rho$ extends to an automorphism of $J$.
  
\end{lemma}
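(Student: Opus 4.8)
The plan is to realize $\rho\in\mathrm{Gal}(L/k)$ as an explicit linear map on the underlying vector space $J = (LK)_+ \oplus LK$ of the second Tits process $J(LK,*,1,\nu)$ and to check that it preserves the cubic norm structure $(N,\#,c)$, hence (being bijective and fixing the identity) is an automorphism of the Jordan algebra. First I would set up notation: write $E = LK$, a degree-$3$ étale extension of $K$ with $K$-algebra norm $N_E = N_{LK/K}$ and trace $T_E$, and let $*$ be the $L$-automorphism of $E$, so that $(E,*)_+ = \{x\in E : x^*=x\}$ is the fixed ring, which contains $L$. Since $L/k$ is Galois, $\rho$ extends to a $K$-automorphism $\tilde\rho$ of $E = L\otimes_k K$ acting as $\rho$ on $L$ and as the identity on $K$; note $\tilde\rho$ commutes with $*$, fixes $K$ (hence commutes with the bar-involution of $K/k$), and satisfies $N_E\circ\tilde\rho = N_E$, $T_E\circ\tilde\rho = T_E$. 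In particular $\tilde\rho$ fixes $1_E$ and restricts to a $k$-automorphism of $(E,*)_+$.

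The natural candidate is then $\widetilde{\rho}(b,x) := (\tilde\rho(b), \tilde\rho(x))$ for $b\in (E,*)_+$ and $x\in E$. I would verify the three defining conditions of an isomorphism of cubic norm structures. It fixes the base point $1 = (1_E,0)$. For the norm, recall
$$N((b,x)) = N_E(b) + T_K\big(\nu\, N_E(x)\big) - T_E\big(b\,x\,u\,\sigma(x)\big),$$
here with $u = 1$ and $\sigma = *$, so the last term is $T_E(b x\, x^*)$; since $\tilde\rho$ preserves $N_E$, $T_E$, commutes with $*$, is multiplicative, and fixes $\nu\in K$ (as $\tilde\rho|_K = \mathrm{id}$) and commutes with $T_K$, each of the three summands is preserved, giving $N\circ\widetilde{\rho} = N$. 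For the adjoint,
$$(b,x)^{\#} = \big(b^{\#} - x\, x^*,\ \bar\nu\, (x^*)^{\#}\, - b x\big),$$
where $b^{\#}$ denotes the adjoint of $b$ in the degree-$3$ algebra $E$; again using that $\tilde\rho$ is a $K$-algebra automorphism (hence commutes with the adjoint $\#$ on $E$ and with multiplication), commutes with $*$, and fixes $\bar\nu$, one gets $\widetilde{\rho}((b,x)^{\#}) = (\widetilde{\rho}(b,x))^{\#}$. Thus $\widetilde{\rho}$ is a norm-structure isomorphism of $J$ onto itself, i.e.\ an automorphism of the Jordan algebra $J$, and by construction its restriction to $(E,*)_+ \supseteq L$ is $\tilde\rho|$, which restricts further to $\rho$ on $L$. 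Since $L$ sits inside $J$ as $L\subseteq (E,*)_+ \hookrightarrow J$ via the first summand, $\widetilde{\rho}$ extends $\rho$, as claimed.

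The one genuine point requiring care—and what I expect to be the main (mild) obstacle—is the bookkeeping with the admissibility data under the extension $E/K$: one must confirm that $\tilde\rho$ really does fix $\nu$ and $\bar\nu$ and $u = 1$ and is compatible with $\sigma = *$, rather than transporting the process $J(E,*,1,\nu)$ to an a priori different process $J(E,*,1,\tilde\rho(\nu)) = J(E,*,1,\nu)$. Because $\tilde\rho$ is chosen to be the identity on $K$ this is automatic, but it is exactly the step where the hypotheses "$K/k$ quadratic étale,'' "$*$ is the $L$-automorphism,'' and "$\nu\in K$'' are used in an essential way; if instead the unitary data lived genuinely over $E$ one would have to track its $\tilde\rho$-image. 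A secondary check is that $\tilde\rho$ is $k$-linear and bijective on $J$, which is clear since it is $k$-linear and bijective on both $E$ and $(E,*)_+$. No appeal to the division property of $J$ is needed; the argument is purely at the level of cubic norm structures.
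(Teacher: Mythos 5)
Your proof is correct and is essentially the argument the paper relies on: this paper does not reprove the lemma but imports it from (Lemma 6.1, \cite{Th-4}), where the extension is obtained exactly as you do, by letting $\rho\otimes\mathrm{id}_K$ act diagonally on $(LK,*)_+\oplus LK$ and checking compatibility with the cubic norm structure. Your verification that $\rho\otimes\mathrm{id}_K$ commutes with $*$, preserves $N_{LK}$, $T_{LK}$ and the adjoint, and fixes $\nu$, $\bar{\nu}$ and $u=1$ covers all the needed points, so nothing is missing.
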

   We now prove
  \begin{theorem}\label{aut-L-fixed} Let $A$ be a cyclic Albert division algebra and $L\subset A$ a cyclic cubic subfield. Then there exists $v\in L^{\times}$ such that $\text{Aut}(A^{(v)}/L)\subset R\text{\bf Aut}(A^{(v)})(k)$.  
  \end{theorem}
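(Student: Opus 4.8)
My plan is a perturbation argument that reduces everything to the stabiliser of a $9$-dimensional subalgebra. Suppose that, for a suitable $v\in L^\times$, we can produce $\theta\in\text{\bf Aut}(A^{(v)})(k)$ which lies in $R\text{\bf Aut}(A^{(v)})(k)$, stabilises $L$, and restricts on $L$ to a generator of $\mathrm{Gal}(L/k)$. Given any $\phi\in\text{Aut}(A^{(v)}/L)$, the automorphism $\psi:=\theta\phi$ then stabilises $L$ but acts on it as $\theta$ does, so it does not fix $L$ pointwise; by the fixed point theorem for Albert division algebras (\cite{Th-2}), $\psi$ fixes pointwise a cubic subfield $M\neq L$ of $A^{(v)}$, and the subalgebra $S:=\langle L,M\rangle$ is $9$-dimensional and $\psi$-stable. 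Hence $\psi\in\text{Aut}(A^{(v)},S)$, which is rational, hence $R$-trivial, by Proposition~\ref{aut-S-rtriv}; so $\psi\in R\text{\bf Aut}(A^{(v)})(k)$, and therefore $\phi=\theta^{-1}\psi\in R\text{\bf Aut}(A^{(v)})(k)$. Thus it suffices to construct $v$ and $\theta$.

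To obtain $\theta$ I would arrange that $A^{(v)}$ contains a $9$-dimensional subalgebra $S_1\supset L$ of the shape $J(LK,*,1,\nu)$ appearing in Lemma~\ref{ext}, with $K/k$ quadratic \'{e}tale, $*$ the nontrivial $L$-automorphism of $LK$, $N_K(\nu)=1$, and $L$ the $(LK,*)_+$-part of $S_1$. Then Lemma~\ref{ext} provides an automorphism of $S_1$ extending a generator $\rho$ of $\mathrm{Gal}(L/k)$; since automorphisms of a $9$-dimensional subalgebra of an Albert division algebra extend to the whole algebra and the natural extensions stabilise that subalgebra (\cite{Th-2}), this gives $\theta\in\text{Aut}(A^{(v)},S_1)$, which lies in $R\text{\bf Aut}(A^{(v)})(k)$ by Proposition~\ref{aut-S-rtriv}. (When $A$ is a first Tits construction one may take $v=1$: then $L\subset D_+$ for a degree-$3$ central division $k$-algebra $D$, $\rho$ extends to $D_+$ by Skolem--Noether and further to $A$ by the explicit first-construction formula of \cite{Th-2}, and that extension is joined to the identity through $D^\times$ and through $\text{\bf SL}(1,D)$, a group of rank $2$, hence rational; so no isotope is needed in that case.)

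To build $S_1$ I would write $A\cong J(B,\sigma,u,\mu)$ with $(B,\sigma)_+$ a $9$-dimensional subalgebra containing $L$ (which exists for any cubic subfield of an Albert division algebra, cf.\ \cite{PR5}, \cite{PR6}); then $LK\subset B$ is a $\sigma$-stable cyclic cubic \'{e}tale subalgebra with $\sigma|_{LK}=*$. Using the isotopy law $J(B,\sigma,u,\mu)^{(v)}\cong J(B,\sigma_v,uv^{\#},N(v)\mu)$ for $v\in L^\times$, one tracks how $(u,\mu)$ transform and chooses $v$ so that $A^{(v)}$ acquires the desired $J(LK,*,1,\nu)$; this copy is generally not the ``standard'' one supported on $L\oplus LK$ (that would force $u\in L$), so one must exhibit the correct $9$-dimensional subalgebra of $A^{(v)}$ directly and verify the norm-one condition $N_K(\nu)=1$, using Hilbert~90 for the cyclic extension $L/k$ and the fact that $A^{(v)}$ is a division algebra.

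The main obstacle is precisely this construction of $v$ and of the accompanying $J(LK,*,1,\nu)$-subalgebra of $A^{(v)}$: the transformation of the Tits parameters under $L$-isotopy, and especially the solvability of the norm equation that pins the parameter $\nu$ to norm one, are where cyclicity of $L$ (hence of $LK/K$) and divisionality of $A$ enter essentially. Once $\theta$ is in hand, the perturbation argument of the first paragraph is routine, using only Lemma~\ref{ext}, the fixed point theorem, and the rationality of stabilisers of $9$-dimensional subalgebras (Proposition~\ref{aut-S-rtriv}).
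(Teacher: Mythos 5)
Your reduction in the first two paragraphs is exactly the paper's argument: produce $\widetilde{\rho}\in R\text{\bf Aut}(A^{(v)})(k)$ stabilising $L$ and inducing a generator $\rho$ of $\mathrm{Gal}(L/k)$; then for $\phi\in\text{Aut}(A^{(v)}/L)$ the composite $\widetilde{\rho}^{-1}\phi$ does not fix $L$ pointwise, so by the fixed point theorem (\cite{Th-2}, Th.~4.1) it fixes some cubic subfield $M\neq L$ pointwise, stabilises the $9$-dimensional subalgebra $\langle L,M\rangle$, and lies in $R\text{\bf Aut}(A^{(v)})(k)$ by Proposition~\ref{aut-S-rtriv}. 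That part is correct and matches the paper.

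The gap is the step you yourself flag as ``the main obstacle'': producing $v$ and the copy of $J(LK,*,1,\nu)$ inside $A^{(v)}$. You do not carry this out, and the route you sketch --- tracking the parameters $(u,\mu)$ of a presentation $A\cong J(B,\sigma,u,\mu)$ of the \emph{whole Albert algebra} under $L$-isotopy, plus a Hilbert~90 argument to force $N_K(\nu)=1$ --- is aimed at the wrong object. The paper settles the point in one line by applying Petersson's normalization (\cite{P}, 6.4; cf.\ \cite{PT}, 1.6) to the $9$-dimensional subalgebra $S\supset L$ itself: the inclusion $L\subset S$ extends to an identification $S=J(LK,*,v,\nu)$ with $v\in L^{\times}$ and with $N_L(v)=N_K(\nu)=1$ already arranged. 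Your worry that such a presentation ``would force $u\in L$'' is unfounded --- that is precisely what the cited result delivers, since $L=(LK,*)_+$ is the first summand of the \'{e}tale Tits process and the admissible element can be taken in $L^{\times}$ of norm one. With that in hand the choice of $v$ is forced: $S^{(v)}=J(LK,*,vv^{\#},N_L(v)\nu)=J(LK,*,1,\nu)$ because $vv^{\#}=N_L(v)=1$, and then Lemma~\ref{ext} together with the extension result (\cite{Th-4}, Prop.~3.4) produces $\widetilde{\rho}$. So what is missing is not a new norm computation but the normalized \'{e}tale Tits process presentation of $S$ relative to $L$; without it your argument is incomplete at exactly the point where the isotope $A^{(v)}$ is manufactured.
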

  \begin{proof} Let $S\subset A$ be a $9$-dimensional subalgebra such that $L\subset S$. By (6.4, \cite{P}) we may assume $S=J(LK,*, v, \nu)$ for $v\in L^{\times}$ and $\nu\in K^{\times}$ with $N_L(v)=N_K(\nu)=1$. We have
    $$S^{(v)}=J(LK,*, vv^{\#},N(v)\nu)=J(LK,*, 1, \nu).$$
    Let $\rho$ be a generator of $Gal(L/k)$. 
    By Lemma \ref{ext}, $\rho$ extends to an automorphism $\widetilde{\rho}$ of $S^{(v)}$. By (Prop. 3.4, \cite{Th-4}), $\widetilde{\rho}$ extends to an automorphism of $A^{(v)}$, which we denote abusing notation, also by $\widetilde{\rho}$. By Prposition \ref{aut-S-rtriv},
    $\widetilde{\rho}\in R\text{\bf Aut}(A^{(v)}, S^{(v)})\subset R\text{\bf Aut}(A^{(v)})(k)$. Now let $\phi\in\text{Aut}(A^{(v)}/L)$. Then $\widetilde{\rho}^{-1}\phi\notin\text{Aut}(A^{(v)}/L)$.
    By (\cite{Th-2}, Th. 4.1), $\widetilde{\rho}^{-1}\phi$ fixes a cubic subfield $M\subset A^{(v)}$ pointwise and $M\neq L$. The subalgebra $S'=<L,M>$ of $A^{(v)}$ generated by $L$ and $M$ is $9$-dimensional and is invariant under
    $\psi:=\widetilde{\rho}^{-1}\phi$. Hence by Proposition \ref{aut-S-rtriv}, we have
    $$\psi\in\text{Aut}(A^{(v)}, S')= R\text{\bf Aut}(A^{(v)}, S')(k)\subset R\text{\bf Aut}(A^{(v)})(k).$$
    Since $\widetilde{\rho}\in R\text{\bf Aut}(A^{(v)})(k)$, it follows that $\phi\in R\text{\bf Aut}(A^{(v)})(k)$. This proves that $\text{Aut}(A^{(v)}/L)\subset R\text{\bf Aut}(A^{(v})(k)$.
  \end{proof}
  \begin{corollary}\label{aut-L-stab} Let $A$ be a cyclic division algebra over a field $k$ and $L\subset A$ a cyclic cubic subfield. Then there exists $v\in L^{\times}$ such that $\text{Aut}(A^{(v)}, L)\subset R\text{\bf Aut}(A^{(v)})$. 
  \end{corollary}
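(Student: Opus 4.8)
The plan is to obtain this as an easy consequence of Theorem~\ref{aut-L-fixed} by analysing the restriction map $\text{Aut}(A^{(v)},L)\to\text{Aut}(L/k)$. First I would fix a generator $\rho$ of $\text{Gal}(L/k)$ and take $v\in L^{\times}$ to be exactly the element supplied by Theorem~\ref{aut-L-fixed}, so that $\text{Aut}(A^{(v)}/L)\subseteq R\text{\bf Aut}(A^{(v)})(k)$. I would then recall from the proof of that theorem that $\rho$ lifts to an automorphism $\widetilde{\rho}\in\text{Aut}(A^{(v)})$ with $\widetilde{\rho}|_{L}=\rho$ --- built by extending $\rho$ first to $S^{(v)}=J(LK,*,1,\nu)$ via Lemma~\ref{ext} and then to $A^{(v)}$ via (Prop.~3.4, \cite{Th-4}) --- and, crucially, that $\widetilde{\rho}\in R\text{\bf Aut}(A^{(v)},S^{(v)})\subseteq R\text{\bf Aut}(A^{(v)})(k)$ by Proposition~\ref{aut-S-rtriv}.

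With this in hand the argument is short. Let $\phi\in\text{Aut}(A^{(v)},L)$ be arbitrary; since $L/k$ is cyclic cubic, $\text{Aut}(L/k)=\langle\rho\rangle$, so the restriction $\phi|_{L}$ equals $\rho^{\,i}$ for some $i\in\{0,1,2\}$. Then $\widetilde{\rho}^{\,-i}\phi$ fixes $L$ pointwise, i.e. $\widetilde{\rho}^{\,-i}\phi\in\text{Aut}(A^{(v)}/L)$, hence lies in $R\text{\bf Aut}(A^{(v)})(k)$ by the choice of $v$. Since $R\text{\bf Aut}(A^{(v)})(k)$ is a normal subgroup of $\text{Aut}(A^{(v)})$ and contains $\widetilde{\rho}$, we conclude $\phi=\widetilde{\rho}^{\,i}\big(\widetilde{\rho}^{\,-i}\phi\big)\in R\text{\bf Aut}(A^{(v)})(k)$. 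As $\phi$ was arbitrary, $\text{Aut}(A^{(v)},L)\subseteq R\text{\bf Aut}(A^{(v)})(k)$, with the same $v$ as in Theorem~\ref{aut-L-fixed}.

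I do not anticipate a real obstacle: essentially all the content is already carried by Theorem~\ref{aut-L-fixed} and by the fact, recalled in Section~2, that $R\text{\bf Aut}(A^{(v)})(k)$ is a subgroup of $\text{Aut}(A^{(v)})$. The single step that warrants a line of care is the claim that $\widetilde{\rho}$ restricts to $\rho$ on $L$ itself, and not merely on an isomorphic copy: this is fine because $L$ is precisely the $*$-fixed subalgebra $(LK,*)_{+}$, so $L$ sits inside $S^{(v)}=J(LK,*,1,\nu)$ as its first summand, and the extensions provided by Lemma~\ref{ext} and by (Prop.~3.4, \cite{Th-4}) are extensions in the literal sense. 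One should also note in passing that $\phi\in\text{Aut}(A^{(v)},L)$ indeed restricts to a $k$-algebra automorphism of $L$ viewed as a subfield of $A^{(v)}$, which is immediate from $L$ being a cubic \'{e}tale subalgebra of $A^{(v)}$ that $\phi$ stabilizes.
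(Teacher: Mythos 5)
Your proposal is correct and follows essentially the same route as the paper: take the $v$ and the lift $\widetilde{\rho}\in R\text{\bf Aut}(A^{(v)})(k)$ from Theorem~\ref{aut-L-fixed}, observe that any $\phi\in\text{Aut}(A^{(v)},L)$ restricts to a power of $\rho$ on $L$, and reduce to $\text{Aut}(A^{(v)}/L)$ by composing with the appropriate power of $\widetilde{\rho}$. The only cosmetic difference is that you treat all three cases $\phi|_{L}=\rho^{i}$ uniformly, whereas the paper writes ``we may assume $\phi|L=\rho\neq 1$''; the content is identical.
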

  \begin{proof} Let $v\in L^{\times}$ be such that $\text{Aut}(A^{(v)}/L)\subset R\text{\bf Aut}(A^{(v)})(k)$. Let $\phi\in\text{Aut}(A^{(v)}, L)$. We may assume that $\phi|L=\rho\neq 1$. Then, with $\widetilde{\rho}$ as above, it follows that $\widetilde{\rho}^{-1}\phi$ fixes $L$ pointwise, hence belongs to $\text{Aut}(A^{(v)}/L)$ which is contained in $R\text{\bf Aut}(A^{(v)})(k)$. Also, $\widetilde{\rho}\in R\text{\bf Aut}(A^{(v)})$ by Proposition \ref{aut-S-rtriv}. Hence it follows that $\phi\in R\text{\bf Aut}(A^{(v)})(k)$. 
  \end{proof}
  We need the following result from (\cite{Th-4}) for our purpose here, we reproduce the proof for the sake of completeness.
 \begin{proposition}[\cite{Th-4}, Th. 6.4]\label{action} Let $A$ be an Albert division algebra over a field $k$ or arbitrary characteristic. Let $M\subset A$ be a cubic subfield and $\mathcal{S}_M$ denote the set of all $9$-dimensional subalgebras of $A$ that contain $M$. Then the group $\text{Str}(A,M)$ acts on $\mathcal{S}_M$.
 \end{proposition}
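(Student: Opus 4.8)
The plan is to show that if $S \in \mathcal{S}_M$ and $\theta \in \text{Str}(A,M)$, then $\theta(S)$ is again a $9$-dimensional subalgebra of $A$ containing $M$; this is exactly what is needed for $\text{Str}(A,M)$ to act on $\mathcal{S}_M$. The only non-obvious point is that $\theta(S)$ is a \emph{subalgebra}, since a priori a norm similarity is merely a norm-preserving (up to scalar) linear bijection, and it need not respect the Jordan multiplication. The key tool is the fact recorded in the preliminaries: when $a \in A$ is invertible, $U_a$ is a norm similarity with $N(U_a(x)) = N(a)^2 N(x)$, and more generally every element of $\text{Str}(A)$ is built from such $U$-operators and automorphisms. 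The structural mechanism relating a norm similarity to algebra homomorphisms is the notion of isotopy: a norm similarity $\theta$ of $A$ is an \emph{isomorphism} $A^{(w)} \to A$ of Jordan algebras for a suitable invertible $w$ (namely $w$ determined by $\theta(1)$), so $\theta$ carries subalgebras of $A^{(w)}$ to subalgebras of $A$.

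First I would recall (or invoke from the cited references on cubic norm structures, e.g.\ \cite{J1}, Chap.\ VI) that for $\theta \in \text{Str}(A)$ the element $w := \theta(1)$ is invertible, and $\theta$ is an isomorphism of Jordan algebras $A^{(w')} \xrightarrow{\ \sim\ } A$ where $w' = w^{-1}$ (equivalently $A \cong A^{(w)}$ via a map whose composite with $\theta$ fixes the structure appropriately); the precise statement is that every norm similarity factors as $U_c \circ g$ with $g$ an automorphism and $c$ invertible, and $U_c \colon A^{(c^{-2}\cdot\, \text{something})} \to A$ is a Jordan isomorphism. The upshot I need: the image under $\theta$ of any subalgebra of the isotope is a subalgebra of $A$. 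Next, since $\theta$ stabilizes $M$ setwise and $M$ is a cubic subfield (in particular $M$ contains $1$, so $w = \theta(1) \in M$), the isotope $A^{(w)}$ restricted to $M$ gives back $M$ as a subfield — isotopes of a field by one of its own elements are canonically the same field — and $S$, which contains $M$ and $1$, is a subalgebra of $A^{(w)}$ as well (it is closed under the $U^{(w)}$ operators because $w \in M \subset S$). Therefore $\theta(S)$ is a $9$-dimensional subspace of $A$, closed under multiplication (being the image of a subalgebra under a Jordan isomorphism $A^{(w)} \to A$), and it contains $\theta(M) = M$. Hence $\theta(S) \in \mathcal{S}_M$.

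Finally I would check the action axioms: $\text{id}$ clearly fixes each $S$, and for $\theta_1, \theta_2 \in \text{Str}(A,M)$ the equality $(\theta_1\theta_2)(S) = \theta_1(\theta_2(S))$ holds set-theoretically, so $\mathcal{S}_M$ is a $\text{Str}(A,M)$-set. Dimension is preserved because $\theta$ is a linear bijection. The one subtlety worth spelling out is why $S$ is a subalgebra of the isotope $A^{(w)}$: this uses that $U^{(w)}_x = U_x U_w$ and that $w \in M \subseteq S$ forces $U_w(S) \subseteq S$ and $U_x(S) \subseteq S$ for $x \in S$, so $U^{(w)}_x(S) \subseteq S$, and likewise the isotope identity $w^{-1} \in M \subseteq S$; thus $(S, U^{(w)}, w^{-1})$ is a unital subalgebra of $A^{(w)}$.

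I expect the main obstacle to be purely expository: making precise and citing correctly the statement ``a norm similarity of $A$ is a Jordan isomorphism between $A$ and an isotope of $A$'' in the quadratic-Jordan, arbitrary-characteristic setting, so that ``image of a subalgebra is a subalgebra'' is rigorous. Once that dictionary is in place the argument is a short verification.
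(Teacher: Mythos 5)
Your argument is correct, but it proceeds by a genuinely different mechanism than the paper's. The paper avoids the isotope dictionary entirely: it picks a nontrivial $\phi\in\text{Aut}(A/S)$ whose fixed-point set is exactly $S$, observes that $\psi\phi\psi^{-1}$ fixes $1$ (precisely because $\psi^{-1}(1)\in M\subset S$ and $\phi$ fixes $S$ pointwise), concludes that $\psi\phi\psi^{-1}$ is an automorphism of $A$, and reads off $\psi(S)$ as its fixed-point set --- hence a $9$-dimensional subalgebra containing $M=\psi(M)$. That route buys brevity at the cost of invoking the structure-theoretic fact that a $9$-dimensional subalgebra of an Albert division algebra is the exact fixed-point set of some nontrivial automorphism. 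Your route instead uses that a norm similarity is a Jordan isomorphism from a suitable isotope, and that $S$ remains a subalgebra of that isotope because the isotoping element lies in $M\subset S$; this is more self-contained (no appeal to $\text{Aut}(A/S)$) but requires you to state the isotopy dictionary carefully in the quadratic-Jordan setting, which is exactly where your write-up currently has a small indexing slip: if $w:=\theta(1)$ then $A^{(w^{-1})}$ has identity $w$, so the isomorphism is $\theta\colon A\to A^{(w^{-1})}$, not $A^{(w^{-1})}\to A$; for the latter direction you want the isotope by $\bigl(\theta^{-1}(1)\bigr)^{-1}$, which still lies in $M\subset S$ since $\theta^{-1}$ stabilizes the subfield $M$. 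Note that both proofs hinge on the same key observation, namely that $\theta^{\pm1}(1)$ lands in $M\subset S$. With the indexing fixed, your proof is complete and would serve as a valid alternative.
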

 \begin{proof} Let $\psi\in\text{Str}(A,M)$ and $S\in\mathcal{S}_M$ be arbitrary. Let $\phi\in\text{Aut}(A/S)$, $\phi\neq 1$. Then $A^{\phi}=S$. We have
   $$\psi\phi\psi^{-1}(1)=\psi(\psi^{-1}(1))=1,$$
   since $\psi^{-1}(1)\in M\subset S$ and $\phi$ fixes $S$ pointwise. Hence $\psi\phi\psi^{-1}\in\text{Aut}(A)$. It follows also that the fixed point subspace of $\psi\phi\psi^{-1}$ is precisely $\psi(S)$ and hence $\psi(S)$ is a subalgebra of $A$ of dimension $9$ and $M=\psi(M)\subset\psi(S)$, therefore $\psi(S)\in\mathcal{S}_M$. This completes the proof.
 \end{proof}
   \begin{proposition}\label{key-1} Let $A$ be an Albert division algebra over a field $k$ and $L\subset A$ be a cubic subfield. Let $\phi\in\text{Aut}(A)$. Then there exists $S\in\mathcal{S}_L$ and $\theta\in\text{Str}(A,S)$ such that $\theta\phi\in\text{Str}(A,L)$.
   \end{proposition}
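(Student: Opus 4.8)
The plan is to start from an arbitrary $\phi\in\text{Aut}(A)$ and the cubic subfield $M:=\phi(L)\subset A$, and to produce, via a norm similarity that carries $M$ back to $L$, the element $\theta$ and the $9$-dimensional subalgebra $S\supset L$ that are required. First I would pick any $9$-dimensional subalgebra $T\in\mathcal{S}_M$ containing $M$; such a $T$ exists because in an Albert division algebra every cubic subfield sits inside a $9$-dimensional subalgebra (e.g.\ any maximal one, or by the subalgebra classification recalled in \S2). Then I would use the isomorphism $\phi\colon A\to A$ to transport things: $\phi^{-1}(T)$ is a $9$-dimensional subalgebra of $A$ containing $\phi^{-1}(M)=L$, so $S_0:=\phi^{-1}(T)\in\mathcal{S}_L$.

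The heart of the argument is to correct $\phi$ by something in $\text{Str}(A,\text{-})$ so that it ends up in $\text{Str}(A,L)$. Here I would lean on the extension theorems for norm similarities of $9$-dimensional subalgebras (the analogue for $\text{Str}$ of the $\text{Aut}$-extension results, i.e.\ Thm.~4.4/4.5 of \cite{Th-2}): a norm similarity of a $9$-dimensional subalgebra of an Albert division algebra extends to a norm similarity of the whole algebra. Concretely, since $L\cong M$ as cubic subfields of $A$ (both are just $k$-isomorphic cubic field extensions once $L/k$ is fixed, and $\phi|_L$ gives an explicit $k$-isomorphism $L\xrightarrow{\sim}M$), I would first extend the inverse isomorphism $M\to L$ to a $k$-isomorphism $\tau\colon T\to S$ of $9$-dimensional subalgebras carrying $M$ to $L$, choosing $S\in\mathcal{S}_L$ as the image; then extend $\tau$ (or rather its combination with $\phi$) to an element of $\text{Str}(A)$. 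The cleanest packaging: set $\theta$ to be an extension to $\text{Str}(A)$ of a norm similarity of a $9$-dimensional subalgebra that undoes the motion $L\mapsto M$ induced by $\phi$, arranged so that $\theta$ stabilizes $S$ (hence $\theta\in\text{Str}(A,S)$ with $S\in\mathcal{S}_L$) and so that $\theta\phi$ maps $L$ to $L$, i.e.\ $\theta\phi\in\text{Str}(A,L)$.

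The step I expect to be the main obstacle is making the two constraints on $\theta$ compatible simultaneously: $\theta$ must stabilize a $9$-dimensional $S\ni L$ \emph{and} $\theta\phi$ must stabilize $L$. One gets $\theta\phi(L)=L$ for free if $\theta$ restricted to $M$ is (an extension of) the isomorphism $M\xrightarrow{(\phi|_L)^{-1}}L$; the delicate point is to choose the ambient $9$-dimensional algebra consistently on both ends. I would handle this by working inside a single $9$-dimensional $S_0\in\mathcal{S}_L$ and its image $\phi(S_0)=T\in\mathcal{S}_M$: build a norm similarity $S_0\to S_0$ fixing $L$ appropriately is not quite what we want, so instead construct $\theta$ as an extension of a norm similarity $T\to S$ for a suitable $S\in\mathcal{S}_L$ that sends $M$ to $L$, using that all relevant $9$-dimensional subalgebras here are of the same type (they all contain a copy of the cubic field $L$, so are of the form $(B,\sigma)_+$ or $D_+$ accordingly) and that $\text{Str}$ acts transitively enough on such configurations. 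The existence of the required extension then follows from the cited extension theorems for $\text{Str}$, and Proposition~\ref{action} guarantees that the image subalgebra $S=\theta(T)$ still lies in $\mathcal{S}_L$ since $\theta$ fixes the point of $M$ (hence of $L$) that pins down membership. Putting these together yields $\theta\in\text{Str}(A,S)$, $S\in\mathcal{S}_L$, and $\theta\phi\in\text{Str}(A,L)$, as required.
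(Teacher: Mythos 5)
There is a genuine gap at exactly the point you flag as ``the main obstacle,'' and the missing idea is a specific one. The paper's proof works with a \emph{single} ambient $9$-dimensional subalgebra: when $M:=\phi(L)\neq L$, it takes $S:=\langle L,M\rangle$, the subalgebra generated by $L$ and $M$, which is automatically $9$-dimensional and contains \emph{both} cubic fields, so $S\in\mathcal{S}_L$ and both the inclusion $\iota\colon L\hookrightarrow S$ and $\phi|_L\colon L\hookrightarrow S$ land in the same $S$. It then invokes the Skolem--Noether theorem for Albert algebras (Garibaldi--Petersson, Th.~5.2.7 of \cite{GP}) applied to these two embeddings, which produces $\theta\in\text{Str}(A,S)$ and $w\in L^{\times}$ with $\theta\phi(x)=wx$ on $L$; since $wL=L$ this gives $\theta\phi\in\text{Str}(A,L)$, and $\theta$ stabilizes $S$ by construction. (The case $M=L$ is trivial with $\theta=1$.)

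Your construction instead builds $\theta$ as an extension of a norm similarity $T\to S$ between two \emph{a priori different} $9$-dimensional subalgebras $T\in\mathcal{S}_M$ and $S\in\mathcal{S}_L$. Such a $\theta$ maps $T$ onto $S$ but there is no reason it should stabilize any member of $\mathcal{S}_L$, which is what the statement demands ($\theta\in\text{Str}(A,S)$ means $\theta(S)=S$). Moreover, the assertion that ``$\text{Str}$ acts transitively enough on such configurations'' to move the embedding $\phi|_L$ to $\iota$ is not a consequence of the extension theorems (Thm.~4.4/4.5 of \cite{Th-2}): those only extend a \emph{given} norm similarity of a $9$-dimensional subalgebra to all of $A$; they do not produce a norm similarity of $S$ relating two different embeddings of $L$ into $S$. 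That transitivity statement is precisely the nontrivial Skolem--Noether theorem of \cite{GP}, and even it gives only $\theta\phi|_L=$ multiplication by some $w\in L^{\times}$ (weak equivalence of embeddings), not the exact inverse isomorphism $(\phi|_L)^{-1}$ you aim to realize --- which is fine for the conclusion, but your stronger demand may not be achievable. To repair your argument: replace the pair $(T,S)$ by the single subalgebra $\langle L,\phi(L)\rangle$ and cite \cite{GP}, Th.~5.2.7.
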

\begin{proof} Consider the cubic subfield $M:=\phi(L)$ of $A$. If $M=L$, then $\phi\in\text{Aut}(A,L)\subset\text{Str}(A,L)$ and we are done by taking $\theta=1$ and $S$ as any $9$-dimensional subalgebra with $S\in\mathcal{S}_L$. So assume that $M\neq L$. Then $S:=<L,M>$, the subalgebra of $A$ generated by $L$ and $M$ is necessarily $9$-dimensional and applying (\cite{GP}, Th. 5.2.7) to the embeddings $\phi:L\rightarrow S$ and $\iota:L\rightarrow S$, there exists $\theta\in\text{Str}(A,S)$ and $w\in L^{\times}$ such that $\theta\phi(x)=wx$ for all $x\in L^{\times}$. In particular, $\theta\phi(L)=L$. Thus $\theta\phi\in\text{Str}(A,L)$.   
   \end{proof}
   \begin{proposition}\label{key-2} Let $A$ be a cyclic Albert division algebra over a field $k$ and $L\subset A$ be a cyclic cubic subfield. Let $S\in\mathcal{S}_L$ and $\psi\in\text{Str}(A,L)$.
     Assume that $S\cong\psi(S)$. Then there exists $\phi\in\text{Aut}(A,L)$ such that $\phi^{-1}\psi\in \text{Str}(A,S)$.
   \end{proposition}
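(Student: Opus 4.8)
The plan is to reduce the statement to producing an automorphism $\phi$ of $A$ with $\phi(L)=L$ and $\phi(S)=\psi(S)$. Indeed $\phi^{-1}\psi$ always lies in $\text{Str}(A)$, and it lies in $\text{Str}(A,S)$ exactly when $\phi^{-1}\psi(S)=S$, i.e.\ when $\phi(S)=\psi(S)$; so such a $\phi$ gives $\phi\in\text{Aut}(A,L)$ with $\phi^{-1}\psi\in\text{Str}(A,S)$. First, by Proposition~\ref{action} applied with $L$ in the role of the cubic subfield, $\psi(S)$ again belongs to $\mathcal{S}_L$; hence $S$ and $\psi(S)$ are $9$-dimensional subalgebras of $A$, both containing $L$, and isomorphic as $k$-algebras by hypothesis. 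Fix a $k$-algebra isomorphism $g\colon S\to\psi(S)$; then $g(L)$ and $L$ are cubic subfields of the $9$-dimensional Jordan division algebra $\psi(S)$, each isomorphic to $L$.

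The main step is to adjust $g$ along $L$, namely to find $\delta\in\text{Aut}(\psi(S))$ with $\delta(g(L))=L$; then $h:=\delta\circ g$ is an isomorphism $S\to\psi(S)$ fixing $L$ setwise. Writing $\psi(S)=(C,\tau)_+$ with $C$ a degree~$3$ central simple algebra over a quadratic \'etale centre $K=Z(C)$ and $\tau$ unitary, the subalgebras $M_1:=g(L)\otimes_kK$ and $M:=L\otimes_kK$ are maximal \'etale in $C$, with $\tau$ inducing on each the canonical involution fixing $g(L)$, resp.\ $L$. By Skolem--Noether, $g|_L\otimes\mathrm{id}_K\colon M_1\to M$ extends to $\mathrm{Int}(p)$ for some $p\in C^{\times}$; as $\mathrm{Int}(p)\,\tau\,\mathrm{Int}(p)^{-1}$ and $\tau$ then agree on $M$, we have $\mathrm{Int}(p)\,\tau\,\mathrm{Int}(p)^{-1}=\mathrm{Int}(u)\circ\tau$ with $u\in M^{\times}$ and $c_0:=u\tau(u)=N_{M/L}(u)\in k^{\times}$. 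One then seeks $n\in M_1^{\times}$ with $\mathrm{Int}(pn)$ commuting with $\tau$; this reduces to solving $p\,(n\tau(n)^{-1})\,p^{-1}=\lambda u^{-1}$ for some $\lambda\in K^{\times}$, which by Hilbert~90 for $M/L$ is possible once $c_0\in N_{K/k}(K^{\times})$ — and the latter holds because $c_0^{3}=N_{L/k}(c_0)=N_{M/k}(u)=N_{K/k}(N_{M/K}(u))$ and $c_0^{2}=N_{K/k}(c_0)$ are both norms from $K$, forcing $c_0$ to be one. Then $\mathrm{Int}(pn)$ commutes with $\tau$ and carries $M_1$ onto $M$, so it induces the desired $\delta\in\text{Aut}((C,\tau)_+)=\text{Aut}(\psi(S))$ with $\delta(g(L))=L$. (When $K$ is split this is just Skolem--Noether in a degree~$3$ division algebra; alternatively one may invoke the conjugacy of embeddings of cubic subfields, cf.\ \cite{GP}.)

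With $h=\delta\circ g\colon S\to\psi(S)$ an isomorphism of $9$-dimensional subalgebras of $A$ fixing $L$, the extension theorem for $9$-dimensional subalgebras of an Albert division algebra (cf.\ \cite{P}, and the use of (Prop.~3.4, \cite{Th-4}) in the proof of Theorem~\ref{aut-L-fixed}) yields an automorphism $\phi$ of $A$ extending $h$. Then $\phi\in\text{Aut}(A,L)$ and $\phi(S)=\psi(S)$, so $\phi^{-1}\psi\in\text{Str}(A,S)$, which completes the proof.

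I expect the only real difficulty to be the middle step: turning the Skolem--Noether element into an honest automorphism of $\psi(S)$ that is compatible with $\tau$ and still sends $g(L)$ to $L$. The rest is a direct appeal to Proposition~\ref{action} together with standard extension results for subalgebras of Albert algebras.
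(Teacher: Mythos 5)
Your overall skeleton matches the paper's: reduce to producing $\phi\in\text{Aut}(A,L)$ with $\phi(S)=\psi(S)$, construct an isomorphism $S\to\psi(S)$ carrying $L$ onto $L$, and extend it to an automorphism of $A$ via the extension theorems of \cite{P-S-T1} and \cite{P}. The gap is in your middle step. You start from an \emph{arbitrary} isomorphism $g\colon S\to\psi(S)$ and claim it can always be corrected by some $\delta\in\text{Aut}(\psi(S))$ with $\delta(g(L))=L$. That is a strong Skolem--Noether statement for cubic subfields of $(C,\tau)_+$, and it is exactly the kind of statement that is not available in general --- it is why the paper (and \cite{GP}) must work with the structure group and isotopes rather than with the automorphism group alone. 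Concretely: if $\text{Int}(q)$ is a Skolem--Noether element of $C^\times$ carrying $M=LK$ to $M_1=g(L)K$ and inducing $g|_L\otimes\mathrm{id}_K$, then $\tau(q)q=c\in L^{\times}$, and $\text{Int}(q)$ can be corrected to a similitude of $(C,\tau)$ (hence to an automorphism of $(C,\tau)_+$) precisely when $c\in k^{\times}\cdot N_{LK/L}\bigl((LK)^{\times}\bigr)$. Your norm computation does not address this obstruction: the element $u=p\tau(p)$ is $\tau$-symmetric, hence lies in $L^{\times}$ and not in $k^{\times}$ in general, so $c_0=u\tau(u)=u^2$ is only an element of $L^{\times}$; the identity $N_{K/k}(c_0)=c_0^{2}$ then has no meaning, and the ``$c_0^{3}/c_0^{2}$'' trick collapses. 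A telling symptom: your argument uses the hypothesis $\psi\in\text{Str}(A,L)$ only to guarantee $L\subset\psi(S)$, so if it were correct the proposition would hold under a much weaker hypothesis than the one stated.

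The paper's proof uses $\psi\in\text{Str}(A,L)$ in an essential way: since $\psi$ stabilizes $L$, the image $\psi(S)$ is isomorphic to the isotope $S^{(v)}=(B,\sigma_v)_+$ for some $v\in L^{\times}$ with $N_L(v)=1$ (\cite{PR7}, Prop.\ 3.8); the hypothesis $S\cong\psi(S)$ then forces $\sigma$ and $\sigma_v$ to be conjugate, and (\cite{KMRT}, Cor.\ 19.31) yields the factorization $v=\lambda ww^{*}$. Feeding this into the \'etale Tits process presentations $J(LK,*,u,\mu)$ and $J(LK,*,uv^{\#},N_L(v)\mu)$, and using the computation $N_{LK}(w_0)\overline{N_{LK}(w_0)}=1$ together with (\cite{PT}, Lemma 4.5), produces an explicit isomorphism $S\to\psi(S)$ preserving $L$. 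That computation is precisely the verification that the obstruction described above vanishes in this particular situation; it cannot be replaced by a general conjugacy claim for isomorphic cubic subfields.
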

   \begin{proof} We have $S=(B,\sigma)_+$ for a degree $3$ central division algebra $B$ over a quadratic \'{e}tale extension $K/k$ and $\sigma$ a unitary involution on $B$.
      Since $\psi\in\text{Str}(A,L)$ and $S\cong\psi(S)$, it follows that $\psi(S)$ is isomorphic to an isotope of $S$ by an element of $L^{\times}$. We may assume, by (\cite{PR7}, Prop. 3.8),  $\psi((B,\sigma)_+)\cong(B,\sigma_v)_+$ for some $v\in L$ with $N_L(v)=1$, since $\psi(L)=L$. Hence we have $(B,\sigma)_+\cong (B,\sigma_v)_+$ and in particular, the involutions $\sigma$ and $\sigma_v$ on $B$ are conjugate. By (\cite{KMRT}, Cor. 19.31),  there exists $w\in (LK)^{\times}$ and $\lambda\in k^{\times}$ such that $v=\lambda ww^*$, where $*$ is the nontrivial $k$-automorphism of $K/k$, extended $L$-linealy to an automorphism of $LK$.

     By (\cite{PT}, 1.6), the inclusion $L\subset (B,\sigma)_+$ extends to an isomorphism $\delta:J(LK, *, u, \mu)\rightarrow (B,\sigma)_+$ for a suitable admissible pair $(u,\mu)\in L^{\times}\times K^{\times}$. Moreover, $L\subset (B,\sigma_v)_+$, hence there is an isomorphism $\eta: J(LK, *, u', \mu')\rightarrow\psi(S)=(B,\sigma_v)_+$ for suitable $(u', \mu')$ extending the inclusion $L\hookrightarrow (B,\sigma_v)_+$. Since $\delta$ and $\eta$ are identity on $L$, it follows that $J(LK, *, u', \mu')$ is the $v$-isotope of $J(LK, *, u, \mu)$. Hence, by (\cite{PR7}, Prop. 3.9), we may assume $(u', \mu')=(uv^{\#}, N_L(v)\mu)$. We have therefore
     $$u'=uv^{\#}=u(\lambda w w^*)^{\#}=u\lambda^2 w^{\#}(w^{\#})^*=uw_0w_0^*,~\mu'=N_L(v)\mu,$$
     where $w_0=\lambda w^{\#}$. We have an isomorphism (see \cite{PR7}, Prop. 3.7)
     $$\phi_1: J(LK, *, uv^{\#}, N_L(v)\mu)=J(LK, *, uw_0w_0^*, \mu)\rightarrow J(LK, *, u, N_{LK}(w_0)^{-1}\mu).$$
     We have
     $$N_{LK}(w_0)\overline{N_{LK}(w_0)}=N_{LK}(\lambda w^{\#})\overline{N_{LK}(\lambda w^{\#})}=(\lambda^3 N_{LK}(w)^2)\lambda^3\overline{N_{LK}(w)^2)}$$
     $$=(\lambda^3 N_{LK}(w)\overline{N_{LK}(w)})^2=N_L(v)^2=1.$$
     Hence, by (\cite{PT}, Lemma 4.5), there exists $w_1\in LK$ with $w_1 w_1^*=1$ and $N_{LK}(w_1)=N_{LK}(w_0)$.
     The map 
     $$\phi_2:J(LK, *, u, N_{LK}(w_0)^{-1}\mu)\rightarrow J(LK, *, u, \mu)$$
     given by $\phi_2((l,x))=(l, xw_1^{-1})$ is an isomorphism by (\cite{PR7}, Prop. 3.7). Hence the composite $\phi_1^{-1}\phi_2^{-1}: J(LK, *, u, \mu)\rightarrow J(LK, *, u', \mu')$ is an isomorphism which maps $L$ to $L$. We have therefore the isomorphism $\phi:=\eta(\phi_1^{-1}\phi_2^{-1})\delta^{-1}:S\rightarrow \psi(S)$, which satisfies $\phi(L)=L$ and $\phi^{-1}\psi(S)=S$, where $\phi$ denotes any extension of $\phi$ as above, to an automorphism of $A$, which is possible by (\cite{P-S-T1} and \cite{P}). This completes the proof. 
   \end{proof}
   We need the following intermediate result.
   \begin{proposition}\label{inter} Let $A$ be a cyclic Albert division algebra over a field $k$ of arbitrary characteristic with $f_5(A)=0$. Let $L\subset A$ be a cubic cyclic subfield and assume $A$ contains a second Tits process $J(LK,*,1,\nu)$ as a subalgebra. Then $\text{\bf Aut}(A)$ is $R$-trivial.
   \end{proposition}
   \begin{proof} Let $\rho$ be a generator of $Gal(L/k)$ and $\widetilde{\rho}$ denote the extension of $\rho$ to an automorphism of $J(LK,*,1,\nu)$ and its firther extension to an automorphism of $A$ as well. By Proposition \ref{aut-S-rtriv}, $\widetilde{\rho}\in R\text{\bf Aut}(A)(k)$.

     Now let $\phi\in\text{Aut}(A)$. First assume $\phi$ fixes $L$ pointwise. Consider the automorphism $\psi:=\widetilde{\rho}\phi\in\text{Aut}(A)$. Clearly $\psi(L)=L$ and $\psi\notin\text{Aut}(A/L)$. By (\cite{Th-2}, Th. 4.1)), there is a cubic subfield $M\subset A$ such that $\psi$ fixes $M$ pointwise. Then $M\neq L$ and the subalgebra $S:=<L,M>$ of $A$ generated by $L$ and $M$ is $9$-dimensional. Clearly $\psi(S)=S$. Hence by Proposition \ref{aut-S-rtriv}, $\psi=\widetilde{\rho}\phi\in R\text{\bf Aut}(A)(k)$. Since $\widetilde{\rho}\in R\text{\bf Aut}(A)(k)$, we have $\phi\in R\text{\bf Aut}(A)(k)$.

     Now assume $\phi\in\text{\bf Aut}(A,L)$ and $\phi\notin\text{Aut}(A/L)$. Without loss of generality, we assume $\phi|L=\rho$. Then $\widetilde{\rho}^{-1}\phi\in\text{Aut}(A/L)\subset R\text{\bf Aut}(A)(k)$. It follows that $\phi\in R\text{\bf Aut}(A)(k)$. Finally assume $M:=\phi(L)\neq L$. By Proposition \ref{key-1}, there exists $S\in\mathcal{S}_L$ and $\theta\in\text{Str}(A,S)$ such that $\theta\phi\in\text{Str}(A,L)$. Hence $\phi^{-1}\theta^{-1}(L)=L$. Let $\gamma:=\phi^{-1}\theta^{-1}$. Then $\gamma\in\text{Str}(A,L)$ and since $\theta(S)=S$, we have $\gamma(S)=\phi^{-1}(S)\cong S$. By Proposition \ref{key-2}, there exists $\eta\in\text{Aut}(A,L)$ such that $\eta^{-1}\gamma\in\text{Str}(A,S)$. We therefore have,
     $$S=\eta^{-1}\gamma(S)=\eta^{-1}\phi^{-1}\theta^{-1}(S)=\eta^{-1}\phi^{-1}(S).$$
     Hence $\eta^{-1}\phi^{-1}\in\text{Aut}(A, S)$. Since, by Proposition \ref{aut-S-rtriv} $\text{Aut}(A,S)\subset R\text{\bf Aut}(A)(k)$  and $\eta\in\text{Aut}(A,L)\subset R\text{\bf Aut}(A)(k)$, it follows that $\phi\in\text{\bf Aut}(A)(k)$.

     Finally, if $N$ is any field extension of $k$, we have $\text{\bf Aut}(A)(N)=\text{Aut}(A\otimes N)$. If $A\otimes N$ is a division algebra, by considering the cyclic subfield $L\otimes N\subset A\otimes N$ and the Tits process subalgebra $J(L_NK,*,1,\nu)\subset A\otimes N$, by exactly same arguments as above, we get $\text{Aut}(A_N)\subset R\text{\bf Aut}(A)(N)$. On the other hand, when $A_N$ is reduced, the hypothesis $f_5(A)=0$ implies $A_N$ has nonzero nilpotents and hence the abstract group $\text{Aut}(A_N)$ is simple (Thm. 14, \cite{J-iv}). Since $\text{\bf Aut}(A)$ has rank $2$-subgroups defined over $k$, it follows that $R\text{\bf Aut}(A)(N)\neq \{1\}$. Hence, simplicity implies $\text{Aut}(A_N)=R\text{\bf Aut}(A)(N)$. This completes the proof that $\text{\bf Aut}(A)$ is $R$-trivial. 
     
     \end{proof}
  We now proceed to prove our main results.
  \begin{theorem}\label{self-isotopic} Let $A$ be an Albert division algebra over a field $k$ of arbitrary characteristic whose all isotopes are isomorphic. Then the algebraic group $\text{\bf Aut}(A)$ is $R$-trivial.
  \end{theorem}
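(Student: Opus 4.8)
The plan is to manufacture, out of the hypothesis on isotopes, a single isotope of $A$ to which Proposition \ref{inter} applies, and then transport the conclusion back to $A$ itself. The first step is to secure a cyclic cubic subfield: an Albert division algebra all of whose isotopes are isomorphic contains one (see \cite{PR2}; when $A=J(D,\lambda)$ is a first Tits construction this is automatic, since the degree $3$ central division algebra $D$ is cyclic by Wedderburn's theorem, and the first Tits case is in any event already settled in \cite{Th-5}). Fix such an $L\subset A$ and, by (\cite{P}, 6.4), a nine-dimensional subalgebra $S$ with $L\subseteq S$. If $A$ contains a subalgebra of the form $D_+$, then $A$ is a first Tits construction and we are done; otherwise $S=(B,\sigma)_+$ for a degree $3$ central division algebra $B$ over a quadratic field extension $K=Z(B)$ of $k$ with $\sigma$ unitary, and the compositum $LK:=L\cdot K\subseteq B$ is a cyclic cubic extension of $K$ on which $\sigma$ restricts to the nontrivial $L$-automorphism $*$.

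Next I would carry out the isotope computation underlying the proof of Theorem \ref{aut-L-fixed}, in the refined form given in \cite{Th-4}. By (\cite{PT}, 1.6) one may write $S=J(LK,*,w,\nu)$ for an admissible pair $(w,\nu)\in L^{\times}\times K^{\times}$ with $N_L(w)=N_K(\nu)=1$, whence $S^{(w)}=J(LK,*,1,\nu)$; isotoping further by a suitable element of $L^{\times}$ so as to replace $\sigma$ by a distinguished involution, while keeping the Tits-process parameters normalized to the shape $(1,\nu')$ --- using (\cite{PR7}, Prop.~3.7--3.9) and (\cite{KMRT}, Cor.~19.30) --- one arrives at $v\in L^{\times}$ with $L\subseteq S^{(v)}=J(LK,*,1,\nu')\subseteq A^{(v)}$ and $\sigma_v$ distinguished, so that $f_5(A^{(v)})=0$ by (\cite{KMRT}, Prop.~40.7). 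Thus $A^{(v)}$ is a cyclic Albert division algebra with $f_5(A^{(v)})=0$ containing the second Tits process $J(LK,*,1,\nu')$ as a subalgebra, and Proposition \ref{inter} yields that $\text{\bf Aut}(A^{(v)})$ is $R$-trivial.

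Finally, because every isotope of $A$ is isomorphic to $A$, we have $A^{(v)}\cong A$ as $k$-algebras, hence $\text{\bf Aut}(A^{(v)})\cong\text{\bf Aut}(A)$ as algebraic groups over $k$; since $R$-triviality is an isomorphism invariant of algebraic groups, $\text{\bf Aut}(A)$ is $R$-trivial. Note that the isotopy hypothesis is invoked only over the ground field $k$: the behaviour over arbitrary field extensions is entirely absorbed into Proposition \ref{inter}, whose proof already treats a division isotope by the subfield/structure-group bookkeeping of Propositions \ref{key-1}--\ref{key-2} together with \ref{aut-S-rtriv}, and a reduced isotope via $f_5=0$ and the abstract simplicity of the automorphism group of a reduced Albert algebra with nonzero nilpotents. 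I expect the only genuine obstacle to be the very first step --- knowing that $A$ is cyclic --- since every tool invoked afterwards (Lemma \ref{ext}, Theorem \ref{aut-L-fixed}, Proposition \ref{inter}) presupposes a cyclic cubic subfield; this is precisely the point at which the hypothesis that all isotopes of $A$ are isomorphic does work that ``$f_5=0$'' alone would not.
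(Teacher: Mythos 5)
Your overall architecture coincides with the paper's: extract a cyclic cubic subfield $L$ from the isotopy hypothesis, normalize a nine-dimensional subalgebra containing $L$ to the shape $J(LK,*,1,\nu)$ by passing to an isotope (harmless since all isotopes are isomorphic), verify $f_5=0$, and feed everything into Proposition \ref{inter}. One cosmetic point first: the cyclicity of $A$ is the content of \cite{Th-3} (the cyclicity paper), not of \cite{PR2}; your Wedderburn remark only covers the first Tits construction case, though you do correctly single out cyclicity as the one input that the isotopy hypothesis must supply.

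The substantive problem is your route to $f_5(A^{(v)})=0$: isotoping by an element of $L^{\times}$ so that the involution becomes distinguished \emph{while} the Tits-process parameters stay normalized at $(1,\nu')$ is asserted but not justified, and it is precisely the step that forces the paper's last theorem to assume cube roots of unity --- there the Kummer element $w\in L$ with $w^3=a\in k^{\times}$ is what makes $J(LK,*,1,\nu)^{(w)}\cong J(LK,*,w^2,a\nu)\cong J(LK,*,1,\nu)$ with $\tau_w$ distinguished, and over a general $k$ you cannot simultaneously prescribe both conditions. The paper sidesteps this entirely: the hypothesis that all isotopes of $A$ are isomorphic forces the reduced model $A_{red}$ to contain nonzero nilpotents, hence $f_5(A)=0$ outright (and then $f_5$ of every isotope vanishes, all isotopes being isomorphic to $A$). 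If you replace your distinguished-involution computation by this observation, the rest of your argument --- including the final transport $\text{\bf Aut}(A^{(v)})\cong \text{\bf Aut}(A)$, which the paper performs in the equivalent form of replacing $A$ by $A^{(w)}$ before invoking Proposition \ref{inter} --- goes through and matches the paper's proof.
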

  \begin{proof} By (\cite{Th-3}), since all isotopes of $A$ are isomorphic to $A$, it follows that $A$ contains a cyclic cubic subfield $L$. Also, if $A_{red}$ denotes the reduced model of $A$ over $k$ (see \cite{PR}), then the hypothesis on $A$ implies $A_{red}$ contains nonzero nilpotents and $f_5(A)=0$. By Corollary \ref{aut-L-stab}, there is $v\in L^{\times}$ such that $\text{Aut}(A^{(v)}, L)\subset R\text{\bf Aut}(A^{(v)})$. Hence $\text{Aut}(A,L)\subset R\text{\bf Aut}(A)(k)$. Let $S\subset A$ be a $9$-dimensional subalgebra containing $L$. We may write $S=(B,\sigma)_+$ for a suitable degree $3$ central division algebra $B$ with a unitary involution over $K:=Z(B)$. By (\cite{P}, 6.4), we may assume $S=J(LK, *, w, \nu)$ for suitable admissible pair $(w,\nu)$ with $N_L(w)=N_K(\nu)=1$. By hypothesis $A^{(w)}\cong A$ and $A^{(w)}\supset S^{(w)}$. Also $S^{(w)}=J(LK,*, w, \nu)^{(w)}\cong J(LK,*, 1, \nu)$. Hence we may assume that $A$ contains $J(LK, *, 1, \nu)$ as a subalgebra. Thus $A$ satisfies the hypothesis of Proposition \ref{inter} and the result now follows, i.e., $\text{\bf Aut}(A)$ is $R$-trivial.

  \end{proof}
  \begin{corollary} Let $A$ be an Albert algebra arising from the first Tits construction. Then $\text{\bf Aut}(A)$ is $R$-trivial.
  \end{corollary}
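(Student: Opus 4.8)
The plan is to deduce this corollary from Theorem \ref{self-isotopic} by verifying its hypothesis for first Tits construction Albert algebras. The key input is the fact, recalled in the introduction and attributed to \cite{PR2}, that if $A=J(D,\lambda)$ arises from the first Tits construction, then all isotopes of $A$ are isomorphic to $A$. Granting that, the corollary is essentially immediate: apply Theorem \ref{self-isotopic} directly.

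More precisely, first I would split into the two cases according to whether $A$ is a division algebra or not. If $A$ is reduced, then $A$ has isotropic norm form and, being a first Tits construction with all isotopes isomorphic, one already knows $\text{\bf Aut}(A)$ is $R$-trivial by the Corollary to Proposition \ref{reduced-R-triv}. (Alternatively, reduced first Tits constructions with no nonzero nilpotents need separate handling, but the self-isotopy hypothesis forces $A$ to contain nonzero nilpotents once it is reduced, exactly as argued in the proof of that corollary.) If instead $A$ is a division algebra, then $A$ is an Albert division algebra all of whose isotopes are isomorphic, so Theorem \ref{self-isotopic} applies verbatim and yields that $\text{\bf Aut}(A)$ is $R$-trivial. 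Combining the two cases gives the statement over $k$; since the class of first Tits construction Albert algebras is stable under base field extension (a scalar extension of $J(D,\lambda)$ is $J(D_L,\lambda)$), the same argument over every extension $L/k$ shows $\text{\bf Aut}(A)$ is $R$-trivial in the strong sense.

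The only genuine obstacle is the citation-level fact that first Tits construction Albert algebras are self-isotopic, i.e. $A^{(v)}\cong A$ for every invertible $v\in A$; this is the content of \cite{PR2} and I would simply invoke it. There is no real calculation to do here: the proof is a two-line reduction, with all the substance already carried by Theorem \ref{self-isotopic} and, for the reduced case, by Proposition \ref{reduced-R-triv} and its corollary. Thus the corollary's proof reads, in essence: ``By \cite{PR2}, all isotopes of a first Tits construction Albert algebra $A$ are isomorphic; if $A$ is a division algebra apply Theorem \ref{self-isotopic}, and if $A$ is reduced apply the Corollary to Proposition \ref{reduced-R-triv}.''
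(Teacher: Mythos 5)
Your proof is correct and follows essentially the same route as the paper: invoke the fact from \cite{PR2} that all isotopes of a first Tits construction are isomorphic and then apply Theorem \ref{self-isotopic} in the division case. The only cosmetic difference is in the non-division case, where the paper simply notes that a non-division first Tits construction is split (so $\text{\bf Aut}(A)$ is a split group, hence $R$-trivial), while you route through the Corollary to Proposition \ref{reduced-R-triv}; both dispositions are valid.
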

  \begin{proof} $A$ is either split, in which case $\text{\bf Aut}(A)$ is split, hence $R$-trivial, or $A$ is division and all isotopes of $A$ are isomorphic, by (\cite{PR2}). Hence the theorem applies to $A$ and $\text{\bf Aut}(A)$ is $R$-trivial. 
    \end{proof} 
    When the base field contains cube roots of unity, we can prove a better result. First recall that under this hypothesis on the base field $k$, by (\cite{PF}), any Albert division algebra over $k$ is cyclic. We have
    \begin{theorem} Let $k$ be a field of arbitrary characteristic containing cube roots of unity. Let $A$ be an Albert division algebra over $k$. Then there exists $v\in A$ such that the group $\text{\bf Aut}(A^{(v)})$ is $R$-trivial.
    \end{theorem}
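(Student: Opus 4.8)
The plan is to reduce the general cyclic Albert division algebra (which $A$ is, since $k$ contains cube roots of unity, by \cite{PF}) to the situation handled by Proposition \ref{inter}, at the cost of passing to a suitable isotope. First I would pick a cyclic cubic subfield $L\subset A$, which exists because $A$ is cyclic, and choose a $9$-dimensional subalgebra $S\subset A$ with $L\subset S$. By (\cite{P}, 6.4) we may write $S=J(LK,*,w,\nu)$ for an admissible pair $(w,\nu)\in L^\times\times K^\times$ with $N_L(w)=N_K(\nu)=1$, where $K=Z(B)$ for $S=(B,\sigma)_+$. The key computation is then the isotope identity already used in the proof of Theorem \ref{self-isotopic}:
\[
S^{(w)}=J(LK,*,w,\nu)^{(w)}=J(LK,*,ww^{\#},N_L(w)\nu)=J(LK,*,1,\nu),
\]
using $ww^{\#}=N_L(w)\cdot 1=1$. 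Thus $A^{(w)}$ contains the standard second Tits process subalgebra $J(LK,*,1,\nu)$, and $L\subset A^{(w)}$ remains a cyclic cubic subfield.

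Next I must ensure the hypothesis $f_5(A^{(w)})=0$ of Proposition \ref{inter} is met, or else arrange for a further isotope where it is. Here I would invoke the distinguished-involution argument sketched in the \texttt{iffalse} block following Theorem \ref{aut-L-fixed}: passing through a chain of isotopes of $S$ by elements of $L^\times$ (which, because $L\subset$ the nine-dimensional subalgebra at each stage, are isotopes of $A$ by elements of $L^\times$), one reaches a nine-dimensional subalgebra of the form $(B,\tau_0)_+$ with $\tau_0$ distinguished; by (\cite{KMRT}, Prop. 40.7) this forces $f_5$ of the ambient Albert algebra to vanish. Concretely, if $\sigma_w$ denotes the relevant involution on $B$ attached to $A^{(w)}$, there is $w'\in L^\times$ with $\mathrm{Int}(w')\circ\sigma_w$ distinguished; setting $v:=w'w\in L^\times$ we get $L\subset (B,\sigma_v)_+\subset A^{(v)}$ with $\sigma_v$ distinguished, hence $f_5(A^{(v)})=0$, while $A^{(v)}$ still contains a subalgebra of the form $J(LK,*,1,\nu')$ by the same isotope bookkeeping.

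Finally, having produced $v\in A$ (in fact $v\in L^\times$) such that $A^{(v)}$ is a cyclic Albert division algebra with $f_5(A^{(v)})=0$ containing a second Tits process $J(LK,*,1,\nu')$ as a subalgebra, I would simply apply Proposition \ref{inter} to $A^{(v)}$ to conclude that $\text{\bf Aut}(A^{(v)})$ is $R$-trivial. If $A^{(v)}$ happens to be reduced rather than division at some field extension, the reduced-case analysis (Proposition \ref{reduced-R-triv} and the ensuing discussion, together with the simplicity result Thm. 14, \cite{J-iv}) handles it exactly as in the proof of Proposition \ref{inter}. The main obstacle I anticipate is the middle step: verifying that one can reach a distinguished involution by isotoping with elements of $L^\times$ \emph{while keeping track of the ambient isotope of $A$}, so that $v$ ends up in $L^\times$ and the subalgebra structure $J(LK,*,1,\nu')\subset A^{(v)}$ is preserved; the cohomological input ($f_5$ vanishing $\Leftrightarrow$ distinguished involution, via \cite{KMRT} 40.7 and 19.30--19.31) is standard, but the chain of isotope identities must be assembled carefully so that no step leaves $L^\times$.
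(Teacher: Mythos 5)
Your opening step coincides with the paper's: normalize $S=J(LK,*,w,\nu)$ with $N_L(w)=N_K(\nu)=1$ and pass to $A^{(w)}\supset S^{(w)}=J(LK,*,1,\nu)$. The gap is in your middle step, and it is exactly the one you flag yourself. You propose a further isotope by some $w'\in L^{\times}$ chosen so that the associated involution becomes distinguished, and then assert that $A^{(v)}$ ``still contains a subalgebra of the form $J(LK,*,1,\nu')$ by the same isotope bookkeeping.'' That bookkeeping does not go through: isotoping $J(LK,*,1,\nu)$ by $w'$ yields $J(LK,*,w'^{\#},N_L(w')\nu)$, and this can be put back into the normalized form $J(LK,*,1,\nu')$ by an isomorphism respecting $L$ only if $w'^{\#}$ is of the form $\lambda ss^{*}$ with $\lambda\in k^{\times}$, $s\in (LK)^{\times}$ (\cite{PR7}, Prop.\ 3.7; \cite{KMRT}, Cor.\ 19.31) --- which is precisely the condition that the new unitary involution be conjugate to the old one. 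If the involution attached to $J(LK,*,1,\nu)$ were not already distinguished, your $w'$ would by design produce a non-conjugate (distinguished) involution, and the normalization $u=1$ --- which is exactly what Lemma \ref{ext}, and hence Proposition \ref{inter}, requires --- would be destroyed. So your argument does not secure the two hypotheses of Proposition \ref{inter} ($f_5=0$ and the presence of $J(LK,*,1,\nu')$ containing $L$) \emph{simultaneously}. Leaning on the \texttt{iffalse} block is also unsafe: that passage was abandoned by the author and still contains unresolved ``(?)'' citations.

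The paper closes this gap by showing that no further isotope is needed: under the hypothesis $\mu_3\subset k$, the involution $\tau$ with $(C,\tau)_+=J(LK,*,1,\nu)$ is \emph{already} distinguished. In characteristic $\neq 3$ one takes a Kummer element $w_0\in L$ with $T_L(w_0)=T_L(w_0^2)=0$ and $w_0^3=a\in k^{\times}$; then $\tau_{w_0}$ is distinguished while
$(C,\tau_{w_0})_+\cong J(LK,*,w_0^2,a\nu)=J(LK,*,w_0w_0^{*},a\nu)\cong J(LK,*,1,\nu)=(C,\tau)_+$,
so $\tau\cong\tau_{w_0}$ is distinguished and $f_5(A^{(v)})=0$ by (\cite{KMRT}, 40.7). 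In characteristic $3$, (\cite{P}, 2.9, 2.10) gives directly that $\tau$ is distinguished. This is the second place where the cube-roots-of-unity hypothesis enters (the first being cyclicity via \cite{PF}), and it is the ingredient your proposal is missing; the concluding appeal to Proposition \ref{inter} is then the same as yours.
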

    \begin{proof} Let $k$ and $A$ be as in the hypothesis, then $A$ is cyclic. Let $L\subset A$ be a cubic cyclic subfield. Then there exists a $9$-dimensional subalgebra $S\subset A$ with $L\subset S$. We may assume that $S=(B,\sigma)_+$ for a central simple algebra $(B,\sigma)$ with a unitary involution over its center $K$, which is a quadratic \'{e}tale extension of $k$. By (1.6, \cite{PT}; 6.4, \cite{P}), we can extend the inclusion $L\subset (B,\sigma)_+$ to an isomorphism of the Tits process $J(LK,*, v, \nu)$ with $(B,\sigma)_+$ for some $v\in L$ with $N_L(v)=1=N_K(\nu)$. We may therefore assume that the Tits process $J(LK,*, v,\nu)\subset A$. Passing to the $v$-isotope $A^{(v)}$ of $A$, we have $J(LK,*, v, \nu)^{(v)}\subset A^{(v)}$ as $v\in L\subset J(LK,*,v,\nu)$. We therefore have
      $$J(LK,*,v,\nu)^{(v)}\cong J(LK, *, vv^{\#}, N_L(v)\nu)=J(LK, *, 1, \nu),$$
      using $N_L(v)=vv^{\#}=1$. Hence $J(LK, *, 1, \nu)\subset A^{(v)}$. We next prove that $f_5(A^{(v)})=0$. We may write $J(LK,*,1,\nu)=(C,\tau)_+$ for a suitable degree $3$ central division algebra $C$ with a unitary involution $\tau$ over its center. First assume that characteristic of $k$ is not $3$. Then the hypthesis on $k$ allows a Kummer element $w\in L$ with $T_L(w)=T_L(w^2)=0$. Then $w^3=:a\in k^{\times}$. Hence the involution $\tau_w=int(w)\tau$ is distinguished. We have,
      $$(C,\tau_w)_+\cong (C,\tau)_+^{(w)}=J(LK,*, 1, \nu)^{(w)}\cong J(LK,*, w^2, a\nu)\cong J(LK, *, 1, \nu).$$
      It follows from this that $f_5(A^{(v)})=0$ (see 40.7, \cite{KMRT}). Now assume characteristic of $k$ is $3$. Then, by (2.9, 2.10, \cite{P}), it follows that $\tau$ is a distinguishe involution on $C$ and since $(C,\tau)_+\subset A^{(v)}$, again we have $f_5(A^{(v)})=0$.

      We have therefore shown that under the hypothesis on $k$, there is $v\in L$ such that $J(LK,*, 1, \nu)\subset A^{(v)}$ and $f_5(A^{(v)})=0$. The result now follows from Proposition \ref{inter}.
      \end{proof}

 \noindent
 {\bf Acknowledgement} Part of this work was done while the author was visiting Linus Kramer of Mathematics Institute, Uni-M\"{u}nster in the summer of 2019. We thank Linus Kramer for his support and hospitality during this visit. The author also thanks Holger Petersson for his interest in this work and some fruitful discussions at Hagen during the above period. We are grateful to the referee for his/her efforts in reading the manuscript carefully and suggesting changes which have resulted in better readability of the paper. We thank V. Nandagopal of School of Mathematics, TIFR for providing us technical support during the preparation of this manuscript. 

\vskip5mm
Indian Statistical Institute, Stat-Math.-Unit, 8th Mile Mysore Road, Bangalore-560059, India.
\center email: maneesh.thakur@gmail.com

\end{document}